\newtheorem{thm}{Theorem}[section] 
\newtheorem{cor}[thm]{Corollary}
\newtheorem{prop}[thm]{Proposition}
\newtheorem{conj}[thm]{Conjecture}
\theoremstyle{definition} 
\newtheorem{eg}[thm]{Example}
\theoremstyle{remark}
\newtheorem{rem}[thm]{Remark}
\numberwithin{equation}{section}
\newcommand{\Z}{\mathbb{Z}}
\newcommand{\C}{\mathbb{C}}
\newcommand{\Q}{\mathbb{Q}}
\newcommand{\mP}{\mathbb{P}}
\title[]
{Lagrangian Fibrations onto Varieties with Isolated Quotient Singularities}
\author{Niklas M\"uller, Zheng Xu}
\address{Department of Mathematics, Universit\"at Duisburg-Essen,
Thea-Leymann-Str. 9, 45127 Essen, Germany.}
\email{niklas.mueller@uni-duisburg-essen.de}
\address{Beijing International Center for Mathematical Research,
Peking University, No. 5 Yiheyuan Road, Haidian District, Beijing 100871, China}
\email{zhengxu@pku.edu.cn}
\date{\today}
\subjclass[2020]{Primary: 14J42
, Secondary: 14B05, 14D06.}
\keywords{Holomorphic symplectic manifolds, Lagrangian fibrations, singularities.}
\begin{document}

\begin{abstract}

In this note, we show that if $f\colon M\rightarrow X$ is a germ of a projective Lagrangian fibration from a holomorphic symplectic manifold $M$ onto a normal analytic variety $X$ with isolated quotient singularities, then $X$ is smooth. In particular, if $f\colon M\rightarrow X$ is a Lagrangian fibration from a hyper-K\"ahler fourfold $M$ onto a normal surface $X$, then $X\cong \mathbb{P}^2$, which recovers a recent result of Huybrechts--Xu and Ou.
\end{abstract}

\maketitle

\tableofcontents

\section{Introduction}

Let $M$ be a complex manifold. A \emph{holomorphic symplectic form} on $M$ is an element $\sigma \in H^0(M, \Omega^2_M)$ such that $\sigma$ is $d$-closed and the associated bilinear pairing $\sigma\colon \mathcal{T}_M \times \mathcal{T}_M \rightarrow \mathcal{O}_M$ is everywhere non-degenerate.
We refer to the pair $(M, \sigma)$ as a \emph{holomorphic symplectic manifold}. 
It is well-known that in this case $\dim M =2n$ is an even integer and that $\omega_M\cong \mathcal{O}_M$ is trivial. 
A holomorphic symplectic manifold $M$ is called \emph{hyper-K\"ahler} if it is a simply connected compact K\"ahler manifold.
Hyper-K\"ahler manifolds form one of the fundamental building blocks of varieties with trivial canonical bundle, as established by the Beauville–Bogomolov decomposition \cites{Bog74,Bea83}. 

A complex subvariety $Z$ of a holomorphic symplectic manifold $(M, \sigma)$ is called \emph{Lagrangian} if $\dim Z = n$ and the restriction of $\sigma$ to the smooth locus $Z^{\mathrm{reg}}$ vanishes identically, i.e., $\sigma|_{Z^{\mathrm{reg}}}=0 \in H^0(Z^{\mathrm{reg}}, \Omega^2_{Z^{\mathrm{reg}}})$. A proper fibration $f\colon M \rightarrow X$ onto a normal analytic variety $X$ is called \emph{Lagrangian} if any irreducible component of any fibre of $f$ is Lagrangian. In particular, Lagrangian fibrations are equidimensional. According to a result of Matsushita \cites{Mat99,Mat99b}, any fibration $f\colon M \rightarrow X$ from a hyper-K\"ahler manifold onto a normal analytic variety $X$ with $0<\mathrm{dim}\ X< \mathrm{dim}\ M$ is Lagrangian.

Hyper-K\"ahler manifolds and their Lagrangian fibrations have been studied from various angles in the past years, see \cite{HM22} for a comprehensive survey. In this note, we focus on properties of the base space $X$ of a projective Lagrangian fibration $f\colon M \rightarrow X$ from a holomorphic symplectic manifold of dimension $2n$. Matsushita \cite{Mat99} proved that if $M$ is a projective hyper-K\"ahler manifold and $X$ is projective, then $X$ is a klt Fano variety of Picard number one. Furthermore, if $X$ is smooth, he showed that the Hodge structure of $X$ matches that of $\mathbb{P}^n$ \cite{Mat05}. This result was generalized by Shen and Yin \cite{SY22} to the case where $M$ is hyper-K\"ahler and $X$ is not necessarily smooth. These results lead to the following conjecture:

\begin{conj}
\label{conj-base-is-Pn}
    Let $M$ be a hyper-K\"ahler manifold of dimension $2n$, and let $f\colon M \rightarrow X$ be a projective Lagrangian fibration. Then $X \cong \mathbb{P}^n$.
\end{conj}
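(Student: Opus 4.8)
The plan is to reduce Conjecture~\ref{conj-base-is-Pn} to the single assertion that the base $X$ is \emph{smooth}, and then to invoke the structure theory already available for smooth bases. First I would assemble the known constraints on $X$: since $M$ is projective hyper-K\"ahler and $f$ is Lagrangian, Matsushita's theorem gives that $X$ is a klt Fano variety of Picard number one, and the results of Matsushita and of Shen--Yin show that the rational cohomology of $X$ coincides with that of $\mathbb{P}^n$, so in particular $b_{2i}(X)=1$ for $0\le i\le n$ and the odd Betti numbers vanish. The decisive additional input is Hwang's theorem: if $X$ is smooth, then $X\cong\mathbb{P}^n$. Granting this, the entire conjecture collapses to the problem of showing that $X$ has no singular points at all.

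To attack smoothness I would work locally, as in the germ setting of the abstract, and exploit the symplectic form directly. Over the open locus $X^{0}\subset X$ where the fibres of $f$ are smooth, each fibre is a complex torus, and the Lagrangian condition forces $\sigma$ to induce a canonical isomorphism $T_{M^{0}/X^{0}}\cong f^{*}\Omega^{1}_{X^{0}}$, and dually $\Omega^{1}_{M^{0}/X^{0}}\cong f^{*}T_{X^{0}}$. Thus the (co)tangent bundle of the base is identified with the Hodge bundle of the weight-one variation of Hodge structure $R^{1}f_{*}\mathbb{Z}$ carried by the torus fibration. Near a singular point $x\in X$, the monodromy of this polarized variation around the punctured germ factors through the local fundamental group of $X^{\mathrm{reg}}$ at $x$, which for an isolated quotient singularity is the finite group $G$ acting freely on $\mathbb{C}^{n}\setminus\{0\}$ with $X\cong\mathbb{C}^{n}/G$ locally. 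The goal is then to show that the rigidity coming from the symplectic pairing, together with the polarization, forces this finite representation, and hence $G$, to be trivial, so that $x$ is in fact a smooth point.

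The hard part will be precisely this local smoothness statement. For a general klt base no such rigidity is known, which is exactly why one restricts to isolated quotient singularities: there the local fundamental group is a genuine finite group with an explicit free action away from a point, so one has a concrete finite monodromy to analyse, and one may hope to combine the Hodge-theoretic normalisation of the period map with the constraints that $\sigma$ and the Fano condition place on the (orbifold) canonical class to exclude any nontrivial $G$. Once smoothness is established at every point of $X$, Hwang's theorem immediately yields $X\cong\mathbb{P}^n$, and in the fourfold case $X\cong\mathbb{P}^2$, recovering the results of Huybrechts--Xu and Ou. I therefore expect essentially all of the difficulty to be concentrated in the passage from \emph{isolated quotient singularity} (and, for the full conjecture, from \emph{arbitrary klt singularity}) to \emph{smooth point}; the cohomological input and Hwang's theorem then close the argument formally.
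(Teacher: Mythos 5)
The statement you are addressing is a \emph{conjecture}, and the paper does not prove it; it only establishes the special case in which the base has isolated quotient singularities (Theorem \ref{thm-main}), which in particular settles the fourfold case. Your reduction of the conjecture to the smoothness of $X$ via Hwang's theorem is the correct and standard framing, and you have correctly located the difficulty in the local statement. But your proposal contains two genuine gaps. First, even the passage from ``$X$ is klt'' to ``$X$ has quotient singularities'' is itself only conjectural (Gurjar's conjecture, Remark \ref{rem_quotient}); it is known only for $\dim X = 2$, so restricting attention to quotient singularities already loses the general conjecture for $n \geq 3$. Second, and more importantly, your mechanism for excluding a nontrivial local group $G$ --- analysing the finite monodromy of the weight-one VHS and ``hoping'' that the polarization and the symplectic pairing force it to be trivial --- is not an argument: you never exhibit a computation that the representation of $G$ must satisfy and then violate. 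The monodromy of $R^1f_*\mathbb{Z}$ factoring through the local fundamental group $G$ gives no obvious contradiction by itself; a nontrivial finite symplectic representation is perfectly consistent with a polarized VHS.

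The paper's actual route through the local step is quite different and is worth contrasting with yours. One writes $X \cong X'/G$ with $G \subseteq \mathrm{GL}_n(\mathbb{C})$ acting freely on $\mathbb{C}^n \setminus \{0\}$, passes to the normalisation $M'$ of $M \times_X X'$, and uses purity of the branch locus to see that $M' \to M$ is \'etale, so that $G$ acts freely on the holomorphic symplectic manifold $M'$. The decisive computation is then the singular Lefschetz--Riemann--Roch formula of Baum--Fulton--Quart applied to the (possibly singular, non-reduced) central fibre $F$ of $f'\colon M' \to X'$: since $F^g = \emptyset$ for $g \neq 1$, the alternating trace of $(g^{-1})^*$ on $H^\ast(F, \mathcal{O}_F)$ vanishes; on the other hand, Matsushita's $G$-equivariant isomorphisms $R^if'_*\mathcal{O}_{M'} \cong \Omega_{X'}^{[i]}$ identify this alternating trace with $\det\bigl(\mathrm{id}_{T_{x'}X'} - dg|_{x'}\bigr)$, which is nonzero for $g \neq 1$ precisely because the singularity is isolated. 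This contradiction is the ``rigidity'' you were hoping for, but it comes from a fixed-point formula on the central fibre rather than from the period map, and without some such quantitative input your outline cannot be completed. You should also note that in citing Matsushita's klt Fano result you tacitly assume $M$ projective, which the conjecture does not.
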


Conjecture \ref{conj-base-is-Pn} was proved by Hwang \cite{Hwa08} under the assumption that $X$ is smooth and $M$ is projective. The projectivity assumption was removed in \cite{GL14}, see also \cites{BS23,LT24} for recent alternative proofs. 
Moreover, Conjecture \ref{conj-base-is-Pn} is verified for all currently known examples of hyper-K\"ahler manifolds; namely when $M$ is of $K3^{[n]}$ or of generalized Kummer type \cite{BM14, Mar14, Yos16}, when $M$ is of OG6 type \cite{MR21} and when $M$ is of OG10 type \cite{MO22}; c.f.\ also \cite{DHMV24} for an intelligible summary. Finally, the conjecture is known to hold in the case where $M$ is projective of dimension four by Huybrechts--Xu \cite{HX22}, building on earlier work by Ou \cite{Ou19}:

\begin{thm}\emph{(Ou \cite{Ou19}, Huybrechts--Xu \cite{HX22})}
\label{thm-HX}

    \noindent
    Let $f\colon M \rightarrow X$ be a Lagrangian fibration 
    from a projective hyper-K\"ahler manifold $M$ of dimension four onto a normal projective surface $X$. Then $X\cong \mathbb{P}^2$.
\end{thm}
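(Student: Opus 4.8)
The plan is to realize Theorem~\ref{thm-HX} as a special case of the main result announced in the abstract, supplemented by the already-known smooth case of Conjecture~\ref{conj-base-is-Pn}. Since $M$ is a projective hyper-K\"ahler fourfold and $X$ is a normal projective surface with $0<\dim X<\dim M$, we are exactly in Matsushita's setting: by \cite{Mat99} the morphism $f$ is Lagrangian and $X$ is a klt Fano variety of Picard number one. As both $M$ and $X$ are projective, $f$ is a projective morphism, so all of the hypotheses needed to invoke the germ statement of the main theorem are at hand.

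Next I would pin down the singularities of $X$. A normal surface is regular in codimension one, so $\operatorname{Sing}(X)$ is a finite set of points; moreover, a two-dimensional klt singularity is automatically a quotient singularity. Hence $X$ carries at worst isolated quotient singularities while arising as the base of the projective Lagrangian fibration $f$, which is precisely the input that the main theorem requires.

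I would then apply the main theorem locally. Restricting $f$ over a small neighbourhood $U$ of a point $x\in\operatorname{Sing}(X)$ yields a germ of a projective Lagrangian fibration onto the isolated quotient singularity $(X,x)$; the main theorem forces $x$ to be a smooth point. Running this over every point of $\operatorname{Sing}(X)$ shows that $X$ is smooth. Once smoothness is known, $X$ is a smooth projective surface underlying a Lagrangian fibration from a projective hyper-K\"ahler fourfold, and Hwang's theorem \cite{Hwa08} (equivalently \cite{GL14}) identifies $X$ with $\mathbb{P}^2$, as desired.

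The reductions above are essentially formal, so I expect the genuine difficulty to sit entirely inside the main theorem, whose proof I am deferring. Showing that the local analytic structure of a projective Lagrangian fibration is incompatible with any nontrivial isolated quotient singularity on the base---ruling such singularities out directly rather than through dimension- or deformation-type-specific classifications---is the substantive step, and it is the part I would have to develop in full.
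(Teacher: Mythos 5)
Your proposal is correct and follows essentially the same route as the paper: show that $X$ has only isolated quotient singularities, apply Theorem \ref{thm-main} locally at each singular point to conclude that $X$ is smooth, and then identify $X$ with $\mathbb{P}^2$. The only cosmetic differences are that the paper obtains the quotient-singularity property via a finite multisection and Brieskorn's theorem (Corollary \ref{cor-open-LF-n=2}) rather than the classification of two-dimensional klt singularities, and that it concludes $X\cong\mathbb{P}^2$ from the elementary fact that a smooth Fano surface of Picard number one is $\mathbb{P}^2$ (using Matsushita's result, which you also cite), whereas you invoke the considerably heavier theorem of Hwang.
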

More precisely, based on the classification of canonical Fano surfaces by Keel--McKernan \cite{KMK99}, Ou \cite{Ou19} showed that $X$ is either smooth, and thus isomorphic to $\mathbb{P}^2$, or has a unique quotient singularity of type $\mathrm{E}_8$. Huybrechts--Xu \cite{HX22} subsequently proved that if $f\colon M \rightarrow X$ is a germ of a projective Lagrangian fibration from a holomorphic symplectic fourfold $M$ onto a normal surface $X$, then $X$ can not have $\mathrm{E}_8$ singularities, thereby completing the proof of Theorem \ref{thm-HX}; see also \cite{BK18} for an independent proof in case the central fibre of $f$ is not multiple. The methods in \cite{HX22} are completely local in nature and rely on deep results of Halle--Nicaise \cite{HN18}, which provide a classification of the essential skeletons of semiabelian degenerations of abelian surfaces.

The main result of this work, presented below, extends the main theorem of \cite{HX22} by proving that the base of a projective Lagrangian fibration can \emph{never} admit an isolated quotient singularity:

\begin{thm}
\label{thm-main}
    Let $f\colon M \rightarrow X$ be a germ of a projective Lagrangian fibration from a holomorphic symplectic manifold $M$ onto a normal analytic variety $X$ with only isolated quotient singularities. Then $X$ is smooth.
\end{thm}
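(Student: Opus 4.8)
The plan is to reduce the statement to the local freeness of the sheaf of reflexive differentials $\Omega^{[1]}_X=(\Omega^1_X)^{**}$ at the singular point $x$, and then to invoke the Lipman--Zariski property for quotient singularities. We may assume $n=\dim X\ge 2$, the case $n=1$ being vacuous, so that $\{x\}$ has codimension at least two. Since an isolated quotient singularity is free in codimension one it carries no pseudo-reflections; hence by Chevalley--Shephard--Todd, $X$ is smooth at $x$ if and only if the local uniformizing group $G$ is trivial. Thus it is enough to show that the reflexive tangent sheaf $\mathcal{T}_X=(\Omega^{[1]}_X)^\vee$ is locally free at $x$, for then the Lipman--Zariski property (known for quotient singularities) forces $G=1$.

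The bridge to the symplectic geometry is Matsushita's identification. Because the singularity is isolated, $X\setminus\{x\}$ is smooth, and over it Matsushita's isomorphism $R^i f_*\mathcal{O}_M\cong\Omega^i_{X\setminus\{x\}}$ applies on the whole smooth locus, including over the discriminant of $f$. In degree one this gives $R^1 f_*\mathcal{O}_M\cong\Omega^1_{X\setminus\{x\}}$, a locally free sheaf of rank $n$ on the punctured germ. Passing to reflexive hulls and using $\operatorname{codim}\{x\}\ge 2$, this extends across $x$ to an isomorphism $(R^1 f_*\mathcal{O}_M)^{**}\cong\Omega^{[1]}_X$ on the entire germ. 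By the previous paragraph it therefore suffices to prove that the rank-$n$ bundle $\Omega^1_{X\setminus\{x\}}\cong R^1 f_*\mathcal{O}_M|_{X\setminus\{x\}}$ on the punctured germ extends to a genuine vector bundle at $x$.

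The hard part will be exactly this local freeness at $x$, where the fibres of $f$ degenerate along the discriminant $\Delta$ (which necessarily passes through $x$, since smoothness of $M$ together with smoothness of $f$ would force smoothness of the base). Here I would exploit both the smoothness of the total space $M$ and the quotient nature of the singularity. Writing $\pi\colon(\mathbb{C}^n,0)\to(X,x)$ for the local uniformization, \'etale with finite Galois group $G=\pi_1^{\mathrm{loc}}(X\setminus\{x\})$ over $X\setminus\{x\}$, finite flat base change reduces local freeness of $R^1 f_*\mathcal{O}_M$ at $x$ to that of its pullback at $0\in\mathbb{C}^n$. Upstairs the base is smooth and the sheaf in question is the lowest Hodge bundle of the weight-one variation $R^1 f_*\mathbb{Z}$ on $\mathbb{C}^n\setminus\widetilde{\Delta}$; the strategy is to realise it, via Saito's theory of mixed Hodge modules applied to the decomposition of $f_*\mathbb{Q}^{H}_M$, as the lowest piece of the Hodge filtration of a pure Hodge module on $\mathbb{C}^n$, and to deduce its local freeness at $0$ from the canonical-extension and torsion-freeness results of Kollár--Saito and Cattani--Kaplan--Schmid together with the finiteness of $G$. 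Controlling this lowest Hodge piece at the isolated quotient point, across a possibly badly singular $\widetilde{\Delta}$, is the genuine technical heart of the argument and the step I expect to be the main obstacle.

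Granting local freeness of $\Omega^{[1]}_X$ at $x$, the proof concludes formally: its dual $\mathcal{T}_X$ is locally free, the Lipman--Zariski property for quotient singularities yields $G=1$, and hence $X$ is smooth. In the surface case $n=2$ this rules out the $\mathrm{E}_8$ singularity and thereby recovers the Huybrechts--Xu input to Theorem \ref{thm-HX}.
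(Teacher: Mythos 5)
There is a genuine gap, and it sits exactly where you place the ``technical heart'' of your argument. Your plan is to reduce smoothness of $X$ to local freeness of $\Omega^{[1]}_X$ at $x$ and then invoke Lipman--Zariski for quotient singularities. The logical reduction is fine, but it buys nothing: for a quotient singularity $X=\C^n/G$ with $G$ small, local freeness of $\Omega^{[1]}_X=(\pi_*\Omega^1_{\C^n})^G$ at $x$ is \emph{equivalent} to $G=\{1\}$, i.e.\ to the theorem itself. Worse, the step you propose to carry out this reduction is incorrect: the local uniformization $\pi\colon \C^n\rightarrow X$ is finite but \emph{not} flat when $X$ is singular (already for an $A_1$ surface singularity, $\pi_*\mathcal{O}_{\C^2}$ contains a non-free rank-one reflexive summand), so ``finite flat base change'' does not reduce local freeness of $R^1f_*\mathcal{O}_M$ at $x$ to local freeness of its (reflexive) pullback at $0\in\C^n$. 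The pullback is automatically locally free -- it is $\Omega^1_{\C^n}$ by Matsushita applied to the induced fibration $f'\colon M'\rightarrow \C^n$ -- and yet $\Omega^{[1]}_X$ is not locally free for any nontrivial $G$; this is precisely the standard example showing that local freeness does not descend along such covers. The appeal to Saito's theory and Cattani--Kaplan--Schmid is therefore aimed at a statement that is false as a general extension principle and, in the form you need it, is the theorem in disguise; you give no argument for it beyond naming the tools.

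The paper avoids this entirely by working \emph{upstairs}. Writing $X=X'/G$ with $X'\subseteq\C^n$ smooth and $G$ small, one normalises $M\times_X X'$ to get $M'$; purity of the branch locus makes $M'\rightarrow M$ \'etale, so $M'$ is a symplectic manifold with a \emph{fixed-point-free} symplectic $G$-action and a Lagrangian fibration $f'\colon M'\rightarrow X'$ over a smooth base. Matsushita's $G$-equivariant isomorphisms $R^if'_*\mathcal{O}_{M'}\cong\Omega^i_{X'}$ (now between honest vector bundles) identify the holomorphic Lefschetz number of $g$ on the central fibre $F=f'^{-1}(0)$ with $\det\big(\mathrm{id}_{\C^n}-g\big)$, and the Baum--Fulton--Quart singular Lefschetz--Riemann--Roch theorem forces this number to vanish because $F^g=\emptyset$. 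Since the singularity is isolated, $\det\big(\mathrm{id}_{\C^n}-g\big)\neq 0$ for $g\neq 1$, whence $G=\{1\}$. The missing idea in your proposal is this fixed-point-theoretic contradiction; without it, or a genuine proof of local freeness of $\Omega^{[1]}_X$ at $x$, the argument does not close.
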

Note that the assumption that $X$ has quotient singularities is very natural in light of a conjecture of Gurjar \cite{Gur03}.
\begin{cor}
\label{cor-open-LF-n=2}
    Let $f\colon M \rightarrow X$ be a germ of a projective Lagrangian fibration from a holomorphic symplectic manifold $M$ of dimension four onto a normal analytic surface $X$. Then $X$ is smooth.
\end{cor}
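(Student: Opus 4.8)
The plan is to deduce this corollary from Theorem \ref{thm-main} by checking that a normal analytic surface $X$ arising as the base of such a germ automatically carries only isolated quotient singularities. The ``isolated'' half is immediate: since $X$ is normal of dimension two it is regular in codimension one, so $X^{\mathrm{sing}}$ has codimension at least two in $X$ and therefore consists of finitely many points. Hence the only content beyond Theorem \ref{thm-main} is that these singularities are \emph{quotient} singularities, and here I would invoke the classical two-dimensional dictionary (Kawamata; see also Koll\'ar--Mori) that the klt, equivalently log terminal, surface singularities are precisely the finite quotient singularities. Consequently it suffices to prove that $X$ has klt singularities.

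To establish klt-ness I would run the canonical bundle formula for $f\colon M \to X$. The total space $M$ is smooth with $\omega_M \cong \mathcal{O}_M$, so $K_M \sim_{\mathbb{Q}} 0$; moreover $f$ is equidimensional and, since it is projective, its general smooth fibre is a compact complex two-torus that is projective, i.e.\ an abelian surface, in particular a variety with trivial canonical class. Thus $f$ is a $\mathbb{Q}$-trivial (Calabi--Yau type) fibration, and the canonical bundle formula of Kawamata--Ambro--Fujino--Mori furnishes a decomposition $K_M \sim_{\mathbb{Q}} f^*(K_X + B_X + M_X)$ with discriminant part $B_X \geq 0$ and nef moduli part $M_X$. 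As $K_M \sim_{\mathbb{Q}} 0$ this reads $K_X + B_X + M_X \sim_{\mathbb{Q}} 0$ over the germ, and the theory of klt-trivial fibrations then shows that the pair $(X, B_X + M_X)$ is klt. Since $B_X + M_X$ is effective and removing an effective boundary only raises discrepancies, $X$ itself is klt, as desired; combined with the first paragraph, $X$ has isolated quotient singularities and Theorem \ref{thm-main} yields smoothness.

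The step I expect to be the main obstacle is making the second paragraph rigorous in the purely local, analytic, non-projective setting of a germ: both the canonical bundle formula and the descent of klt-ness are customarily formulated for projective morphisms of algebraic varieties, whereas here $M$ is only a germ of a holomorphic symplectic manifold and $X$ a normal analytic surface germ. I anticipate this is surmountable precisely because $f$ is assumed projective and the assertion is local over $X$, so after shrinking one works with a projective fibration over a small (Stein, or algebraizable) base; the one genuinely delicate point is the effectivity of the moduli part $M_X$, which one must secure either from the low dimension of the base or by arguing directly with the structure of the abelian-surface fibration. Once the effectivity is in hand the argument closes as above, and $X$ is smooth.
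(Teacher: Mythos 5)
Your reduction to Theorem \ref{thm-main} via ``normal surface singularities are isolated'' plus ``klt surface singularities are exactly quotient singularities'' is sound, and the first of these observations is precisely what the paper uses. Where you diverge is in how you establish that $X$ has quotient (equivalently, klt) singularities, and there your route has a genuine unresolved gap: the canonical bundle formula for klt-trivial fibrations, together with the statement that the base of such a fibration is of klt type (Ambro, Fujino--Gongyo), is established in the literature for projective morphisms of algebraic varieties, whereas here $f$ is only a germ of a projective morphism of complex analytic spaces. You flag this yourself, but ``I anticipate this is surmountable'' is not a proof; in particular the handling of the moduli part (its b-semi-ampleness, or the perturbation needed to replace $B_X+M_X$ by an effective boundary keeping the pair klt) is exactly the kind of statement that does not transfer for free to germs.

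The paper closes this gap with a much more elementary device that you should adopt: as in \cite[Prop.\ 1.10]{HM22}, one finds a closed submanifold $Z\subseteq M$ (a local multisection) such that $f|_Z\colon Z\to X$ is finite and surjective. A normal surface which is the image of a finite morphism from a smooth surface has only quotient singularities by \cite[Satz 2.8]{Brie68} (see also \cite{Mum61} and Remark \ref{rem_quotient}), so Theorem \ref{thm-main} applies directly --- no canonical bundle formula and no klt detour, and the argument is native to the analytic category. Note also that even if you insist on passing through klt-ness, the same multisection already delivers it: this is Proposition \ref{prop-base-space-equidim-fibration}(1), proved by comparing $K_Z$ with $(f|_Z)^*K_X$ via the ramification divisor, which is far lighter than the Kawamata--Ambro machinery and valid for germs.
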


As an immediate corollary, we obtain an alternative proof of Theorem \ref{thm-HX}, which avoids the use of \cites{KMK99, HN18}. The conclusion of Theorem \ref{thm-main} is somewhat surprising to the authors, as
there are well-known examples which show that, in the non-compact setting, $X$ may indeed have (non-isolated) quotient singularities, see Section \ref{section_sharpness}.

The basic strategy in the proof of Theorem \ref{thm-main} is similar to that in \cite{BK18, HX22}: locally, one writes $X=X'/G$, where $X'$ is a smooth variety and $G$ is a finite group. Let $M'$ denote the normalisation of the fibre product $M\times_X X'$. It follows from purity of the branch locus, that $M'$ is smooth and that the induced morphism $\pi\colon M'\rightarrow M$ is \'etale. Then the idea in \cite{HX22} is to find a fixed point for the action of $G$ on $M'$, which would contradict the \'etaleness of $\pi$, thereby completing the proof. Indeed, in the setting of Theorem \ref{thm-main}, this contradiction is established in Section \ref{sec-proofs} by applying the singular Lefschetz-Riemann–-Roch formula of Baum--Fulton--Quart \cite{BFQ_SingularLefschetzRiemannRoch}, together with a short computation that uses Matsushita's formula for the higher direct images of the structure sheaf under Lagrangian fibrations \cite{Mat05}.

With slightly more effort, we can also treat the case that $\dim M = 6$, assuming $X$ has only terminal singularities:
\begin{thm}
\label{thm-open-LF-n=3}
     Let $f\colon M \rightarrow X$ be a germ of a projective Lagrangian fibration from a holomorphic symplectic manifold $M$ of dimension six onto a normal analytic threefold $X$. Assume that the singularities of $X$ are terminal. Then $X$ is smooth.
\end{thm}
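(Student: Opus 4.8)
The plan is to imitate the strategy of Theorem~\ref{thm-main}, the essential new point being a holomorphic Lefschetz computation at a fixed point that now lies on the singular locus of the base. First recall that a terminal threefold singularity is automatically isolated, so the ``isolated'' hypothesis of Theorem~\ref{thm-main} is free here. By the Reid--Mori classification of terminal threefold singularities, the germ $(X,x)$ is analytically a cyclic quotient $(Y,y)/\mu_r$, where $p\colon (Y,y)\to (X,x)$ is the index-one (canonical) cover, $r$ is the Gorenstein index, and $(Y,y)$ is a compound Du Val---hence Gorenstein terminal---hypersurface singularity. The only feature absent from the quotient case treated in Theorem~\ref{thm-main} is therefore that $Y$ itself may be singular.

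Since $X$ is smooth away from $x$, the cover $p$ is étale over $X\setminus\{x\}$. Forming $N$, the normalisation of $M\times_X Y$, the induced finite map $\pi\colon N\to M$ can be branched only over $f^{-1}(x)$, a set of codimension $n=3$ in the smooth manifold $M$. Purity of the branch locus then forces $\pi$ to be étale, so, exactly as in Theorem~\ref{thm-main}, $N$ is a smooth holomorphic symplectic sixfold equipped with a free $\mu_r$-action satisfying $N/\mu_r=M$, and $g\colon N\to Y$ is again a germ of a projective Lagrangian fibration, now onto the Gorenstein base $Y$, with $y$ as the unique fixed point of the action.

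Suppose first that $r>1$. For a generator $h$ of the deck group I would compute the holomorphic Lefschetz number $\sum_i(-1)^i\operatorname{tr}\bigl(h\mid H^i(N,\mathcal{O}_N)\bigr)$. Since $g$ is proper and $Y$ is Stein, the Leray spectral sequence together with the reflexive analogue of Matsushita's isomorphism $R^i g_*\mathcal{O}_N\cong\Omega^{[i]}_Y$ \cite{Mat05} rewrites this as $\sum_i(-1)^i\operatorname{tr}\bigl(h\mid H^0(Y,\Omega^{[i]}_Y)\bigr)$, an expression governed entirely by the induced $\mu_r$-action on the reflexive differentials at $y$. The singular Lefschetz--Riemann--Roch formula of Baum--Fulton--Quart \cite{BFQ_SingularLefschetzRiemannRoch} localises this number to a single contribution at $y$; if it is nonzero, then $h$ must fix a point of $N$, contradicting freeness of the action. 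When $y$ happens to be a smooth point of $Y$ the contribution is the elementary factor $\det(1-dh_y)\neq0$ and the argument reduces to the one behind Theorem~\ref{thm-main}.

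The main obstacle is the evaluation of this local term when $y$ is a genuine compound Du Val point: one must read off the Baum--Fulton--Quart contribution from the explicit hypersurface equation of the cDV germ and the $\mu_r$-weights supplied by Reid--Mori, and verify that it cannot vanish, thereby excluding all indices $r>1$. It remains to dispose of the Gorenstein case $r=1$, where $X=Y$ is itself cDV and no deck group is present; I expect to handle this directly, for instance by proving that $R^1 g_*\mathcal{O}_N\cong\Omega^{[1]}_Y$ is locally free and invoking the Lipman--Zariski property of terminal singularities---local freeness of $\Omega^{[1]}_Y$ renders the tangent sheaf $\mathcal{T}_Y=(\Omega^{[1]}_Y)^{\vee}$ locally free, forcing $Y$, and hence $X$, to be smooth. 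Combining the two cases shows that $X$ admits no terminal singularity, that is, $X$ is smooth.
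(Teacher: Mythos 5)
Your reduction to the index-one cover is exactly the paper's first move: by Reid's theorem the germ is a quotient $X'/G$ with $X'$ an isolated cDV point, and purity of the branch locus makes $M'\to M$ \'etale with $M'$ a smooth symplectic manifold fibring over $X'$. But the two steps you yourself flag as remaining are precisely where your route breaks down, and the paper avoids both. For $r>1$ you propose to evaluate the Baum--Fulton--Quart local contribution at the cDV point $y$ and show it is nonzero; besides the fact that you give no computation, the identification of the global Lefschetz number with data at $y$ relies on Cohomology and Base Change, which in Theorem \ref{thm-main-formula} requires $R^ig_*\mathcal{O}_N\cong\Omega^{[i]}_Y$ to be locally free --- something you cannot assume when $Y$ is singular. (Note also that in the smooth case the logic is the reverse of what you write: the vanishing of the global Lefschetz number \emph{forces} $\det(\mathrm{id}-dh_y)=0$, which contradicts the fixed point being isolated; one does not argue from non-vanishing of a local term.) For $r=1$ you ``expect'' to prove $\Omega^{[1]}_Y$ locally free and invoke Lipman--Zariski, but no argument is offered, and local freeness is substantially stronger than anything the Lagrangian fibration directly provides.

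The paper closes both gaps at once by showing that the index-one cover $X'$ is in fact \emph{smooth} before any Lefschetz argument enters: a cDV point is a hypersurface singularity, hence a local complete intersection; since $X'$ is smooth in codimension two, $\Omega^1_{X'}=\Omega^{[1]}_{X'}$ is reflexive by Prop.\ \ref{prop-Kunz}(1); by Prop.\ \ref{prop-base-space-equidim-fibration}(2) (which rests on the multisection trick and the results of \cite{SVV23}) this sheaf is Cohen--Macaulay; and Kunz's criterion, Prop.\ \ref{prop-Kunz}(2), says that a local complete intersection with Cohen--Macaulay $\Omega^1$ is smooth. Once $X'$ is smooth, $x\in X=X'/G$ is an isolated quotient singularity and Theorem \ref{thm-main} applies verbatim. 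The Cohen--Macaulay/Kunz step is the missing idea in your outline; without it, both of your cases remain open.
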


In fact, our argument for Theorem \ref{thm-open-LF-n=3} shows slightly more, see Remark \ref{rem_klt isolated case}.

\section*{Acknowledgments}
\noindent

The authors would like to thank Jihao Liu for inviting N.M. to visit him at Peking University, where the authors met and this project was initiated. The authors are grateful to Franco Giovenzana, Christian Lehn, Stefan Kebekus, Wenhao Ou, Chenyang Xu, Qizheng Yin and Vanja Zuliani for fruitful discussions on Lagrangian fibrations, to Hyunsuk Kim and Thomas Peternell for helpful discussions on singularities and to Marc Levine for explanations concerning the paper \cite{BFQ_SingularLefschetzRiemannRoch}. Additionally, N.M. wishes to thank Andreas H\"oring for organising a seminar on the paper \cite{Ou19}. Finally, the authors are grateful to the anonymous referees for their valuable suggestions.

N.M. gratefully acknowledges financial support by the DFG Research Training Group 2553, ``Symmetries and classifying spaces: analytic, arithmetic and derived" and is grateful to Daniel Greb for his valuable advise. Z.X. would like to thank Zhiyu Tian for his comments and constant support.

\section{Preliminary results}\label{sec-pre}

\subsection{Notation and Conventions}\label{subsec-notation}

Throughout this paper, we work over the field $\C$ of complex numbers.
We employ the standard notation and conventions in \cites{Har77, KM98}. If $X$ is a normal complex analytic variety, we denote by $\Omega_X^1$ its sheaf of Kähler differentials and by $\Omega_X^{[1]} := (\Omega_X^1)^{\vee\vee}$ the reflexive hull. Throughout this paper, by an action of a group we will always mean an action from the left.

\subsection{Preliminaries on Lagrangian Fibrations}
We start this section by collecting some properties of Lagrangian fibrations. Although the precise statements that we will require later might not necessarily be available in the current literature, the results presented below certainly stick very closely to what is known.
\begin{prop}
\label{prop-base-space-equidim-fibration}

    Let $f\colon Y \rightarrow X$ be a germ of a proper equidimensional fibration from a complex manifold $Y$ onto a normal analytic variety $X$. Then 
    \begin{itemize}
        \item[\emph{(1)}] $X$ is $\Q$-factorial and log terminal.
        \item[\emph{(2)}] The sheaves $\Omega_X^{[i]}$ are Cohen-Macaulay for any $i\geq 0$.
    \end{itemize}
\end{prop}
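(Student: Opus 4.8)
The plan is to argue everything locally on $X$, so I fix a point $x\in X$ and work in a small analytic neighbourhood. Since $X$ is normal, its singular locus $X^{\mathrm{sing}}$ has codimension at least two, and over the smooth locus $X^{\mathrm{sm}}=X\setminus X^{\mathrm{sing}}$ the restriction of $f$ is an equidimensional morphism between smooth spaces, hence flat by miracle flatness (as $Y$ is Cohen--Macaulay and $X^{\mathrm{sm}}$ is regular). The second basic input I would record first is that equidimensionality makes pullback of Weil divisors well defined: for a prime divisor $D\subset X$, every component of $f^{-1}(D)$ has codimension one in $Y$, so setting $f^{*}D$ to be the closure of the honest pullback $f^{*}(D\cap X^{\mathrm{sm}})$ produces a Weil divisor on $Y$, which is Cartier because $Y$ is smooth. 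This yields a pullback homomorphism of class groups agreeing with the usual one over $X^{\mathrm{sm}}$.

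For part (1) I would first establish $\Q$-factoriality. Given a Weil divisor $D$, the line bundle $\mathcal{O}_Y(f^{*}D)$ is numerically trivial on every fibre lying over $X^{\mathrm{sm}}$, since there it is a genuine pullback; equidimensionality then lets me propagate numerical $f$-triviality across the codimension-$\geq 2$ locus, and a relative descent argument (relative base-point-freeness, i.e.\ that an $f$-numerically trivial line bundle on $Y$ is the reflexive pullback of a $\Q$-Cartier divisor on $X$) shows that a multiple of $D$ is Cartier near $x$. With $\Q$-factoriality in hand, $K_X$ is $\Q$-Cartier, and I would prove that $(X,0)$ is log terminal by a discrepancy computation: resolve $X$ by $\pi\colon\widetilde{X}\to X$, resolve the main component of the fibre product to obtain a smooth $\widetilde{Y}$ with $g\colon\widetilde{Y}\to\widetilde{X}$, and compare discrepancies along $g$, using the smoothness of $\widetilde{Y}$ together with equidimensionality to bound the discrepancies of $X$ strictly above $-1$. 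As a consequence $X$ is klt, hence has rational---and in particular Cohen--Macaulay---singularities; this already settles the case $i=0$ of part (2), since $\Omega_X^{[0]}=\mathcal{O}_X$.

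For the higher reflexive differentials in part (2), the sheaf $\Omega_X^{[i]}$ is automatically $S_2$, so I only need $\operatorname{depth}_x\Omega_X^{[i]}=\dim X$, i.e.\ the vanishing of the local cohomology $H^{j}_{x}(\Omega_X^{[i]})$ for $2\leq j<\dim X$. The idea is to exploit that on the smooth total space $\Omega_Y^{i}$ is locally free, hence Cohen--Macaulay: using the log-terminality from part (1), the reflexive pullback $f^{[*]}\Omega_X^{[i]}$ admits a natural map to $\Omega_Y^{i}$ (an isomorphism onto its image over the fibrewise-smooth locus) via the extension theorem for differential forms on klt spaces, and over the flat locus $X^{\mathrm{sm}}$ the relative cotangent sequence identifies the relevant pieces. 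I would then transfer the depth of $\Omega_Y^{i}$ to $\Omega_X^{[i]}$ through the flat, equidimensional structure, controlling the contribution of the codimension-$\geq 2$ singular fibres by a Leray/local-cohomology argument.

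The main obstacle is precisely this last step---the Cohen--Macaulayness of $\Omega_X^{[i]}$ for $i>0$---since it does not follow formally from klt singularities alone and genuinely uses the equidimensional fibration. I expect the cleanest route to be an induction on $\dim X$ by cutting with general hyperplane sections, reducing to the surface case where reflexivity already forces Cohen--Macaulayness (a reflexive sheaf on a normal, hence $S_2$, surface has depth two); the delicate point is to compare $\Omega_X^{[i]}|_H$ with $\Omega_H^{[i]}$ and to verify that the restriction introduces no embedded cohomology. Here the fact that equidimensionality of $f$ is preserved under restriction to $f^{-1}(H)\to H$ for general $H$ is what keeps the induction running.
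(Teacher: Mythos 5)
There is a genuine gap in both parts, and the missing idea is the same one in each case: the paper first replaces $Y$ by a local multisection. As in \cite[Prop.\ 1.10]{HM22}, after shrinking one can find a complex submanifold $Z\subseteq Y$ such that $f|_Z\colon Z\rightarrow X$ is \emph{finite} and surjective, and the whole proposition is then about finite covers of $X$ by a smooth space. Your part (1) tries to work with the fibration itself, and the two key steps do not hold as stated. For $\Q$-factoriality, the assertion that an $f$-numerically trivial line bundle on $Y$ is the reflexive pullback of a $\Q$-Cartier divisor on $X$ is essentially the statement you are trying to prove, and it is not available for free: in the situation the paper cares about the fibres are abelian varieties, which carry non-torsion numerically trivial line bundles, so $f$-numerical triviality does not force descent and $f_*$ of such a bundle may even vanish. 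The finite-morphism version is what works: for $g=f|_Z$ one pushes forward via the norm, $g_*g^*D=(\deg g)D$, which is \cite[Lem.\ 5.16]{KM98}. Likewise, your discrepancy comparison along a resolved fibration would require a canonical bundle formula to control $K_{\widetilde Y/\widetilde X}$; smoothness of $\widetilde Y$ plus equidimensionality does not by itself bound the discrepancies of $X$. With the finite $g\colon Z\rightarrow X$ one instead writes $K_Z-D_g\sim g^*K_X$ with $D_g$ the ramification divisor, notes that $(Z,-D_g)$ is (sub-)klt since $Z$ is smooth and $-D_g\leq 0$, and concludes that $X$ is log terminal by the behaviour of discrepancies under finite morphisms \cite[Cor.\ 2.43]{Kol13}.

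For part (2) you correctly identify that this is the hard point, but neither of your proposed routes closes it. Transferring depth from $\Omega_Y^i$ to $\Omega_X^{[i]}$ through $f$ would require a map between $f^{[*]}\Omega_X^{[i]}$ and $\Omega_Y^i$ with controlled cokernel (or a splitting), which is not available; and the hyperplane-section induction stalls exactly at the comparison of $\Omega_X^{[i]}|_H$ with $\Omega_H^{[i]}$, which you flag but do not resolve. The paper's actual argument is a citation of a recent result of \cite{SVV23}: because $X$ admits a finite surjection from a smooth space, it has pre-$k$-rational and pre-$k$-Du Bois singularities for all $k$, so the Du Bois complexes $\underline{\Omega}^i_X$ are concentrated in degree zero and satisfy $\mathcal{E}xt^j_{\mathcal{O}_X}(\underline{\Omega}^i_X,\omega_X)=0$ for $j>0$; since $X$ is log terminal one has $\underline{\Omega}^i_X\cong\Omega_X^{[i]}$, and the Ext-vanishing is precisely Cohen--Macaulayness by local duality. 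Without this input (or an equivalent substitute) part (2) remains unproved in your write-up.
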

\begin{proof}
    As in the proof of \cite[Prop. 1.10]{HM22}, locally, there exists a complex submanifold $Z\subseteq Y$ such that the restriction $f|_Z\colon Z \rightarrow X$ is finite. Replacing $Y$ by $Z$, we may assume that $f$ is finite. This immediately implies that $X$ is $\Q$-factorial \cite[Lem.\ 5.16]{KM98}. Let $D_f$ be the ramification divisor. Then $K_Y - D_f \sim f^*K_X$. Since the pair $(Y, -D_f)$ is klt \cite[Cor.\ 2.35]{KM98}, we deduce that $X$ is log terminal \cite[Cor.\ 2.43]{Kol13}. This shows (1). In particular, $X$ is Cohen-Macaulay.

    Concerning (2), by \cite[Prop.\ 4.2]{SVV23}, $X$ has pre-$k$-rational and pre-$k$-Du Bois singularities for any $k\geq 0$. By definition \cite[Def.\ 1.1]{SVV23}, this means that the $i$-th Du Bois complex $\underline{\Omega}^i_X$ is concentrated in degree zero for all $i\geq 0$ and that
    \begin{equation}
         \mathcal{E}xt^{j}_{\mathcal{O}_X}\big(\underline{\Omega}^i_X, \omega_X\big) = 0, \qquad \forall j>0, \ \forall i\geq 0.
         \label{eq-CM}
    \end{equation}
    Since $X$ is log terminal, $\underline{\Omega}^i_X = \mathcal{H}^0(\underline{\Omega}^i_X) \cong \Omega_X^{[i]}$, see \cite[Rmk.\ 2.5]{SVV23}. Consequently, it follows from \eqref{eq-CM}, that $\Omega_X^{[i]}$ is Cohen-Macaulay for any $i\geq 0$, see \cite[Cor.\ 5.3]{BS76}.
\end{proof}

\begin{rem}\label{rem_quotient}
    More generally, it is conjectured \cite{Gur03}, see also \cite[2.24]{Kol07}, that in the situation of Prop.\ \ref{prop-base-space-equidim-fibration}, $X$ should have finite quotient singularities. This is known in case $\dim X = 2$ \cite{Mum61, Brie68}. 
\end{rem}

\begin{prop}\emph{(Takegoshi \cite{Tak95})}
\label{prop-CM}

\noindent
    Let $f\colon M \rightarrow X$ be a germ of a projective Lagrangian fibration from a holomorphic symplectic manifold $M$ onto a normal analytic variety $X$. Then the sheaves $R^if_*\mathcal{O}_M$ are reflexive for all $i\geq 0$.
\end{prop}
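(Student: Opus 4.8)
The plan is to combine the torsion-freeness and splitting theorems of Takegoshi \cite{Tak95} with Grothendieck--Serre duality. Write $\dim M = 2n$, so that $f$ has relative dimension $n$ and $\dim X = n$. Since $M$ carries a holomorphic symplectic form we have $\omega_M \cong \mathcal{O}_M$, and hence $R^i f_* \mathcal{O}_M \cong R^i f_* \omega_M$ for every $i$. By Proposition \ref{prop-base-space-equidim-fibration}, $X$ is log terminal, hence has rational and in particular Cohen--Macaulay singularities; thus its dualizing complex is $\omega_X^\bullet = \omega_X[n]$, where $\omega_X = \mathcal{O}_X(K_X)$ is a reflexive sheaf. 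The two inputs I would take from Takegoshi are: (i) each $R^i f_* \omega_M$ is torsion-free, and (ii) the Kähler analogue of Koll\'ar's decomposition theorem, i.e.\ a splitting
\begin{equation*}
Rf_* \omega_M \cong \bigoplus_{i=0}^{n} R^i f_* \omega_M[-i]
\end{equation*}
in the derived category $D(X)$. Both hold in the present setting, since a germ of a projective Lagrangian fibration is in particular a proper Kähler morphism.

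Next I would invoke relative Grothendieck--Serre duality for the proper morphism $f$ in the analytic category (Ramis--Ruget--Verdier). As $M$ is smooth and $\omega_M \cong \mathcal{O}_M$, one has $f^! \omega_X^\bullet \cong \omega_M^\bullet \cong \mathcal{O}_M[2n]$, and duality yields
\begin{equation*}
Rf_* \mathcal{O}_M[2n] \cong R\mathcal{H}om_{\mathcal{O}_X}\big(Rf_* \mathcal{O}_M, \omega_X^\bullet\big) = R\mathcal{H}om_{\mathcal{O}_X}\big(Rf_* \mathcal{O}_M, \omega_X\big)[n].
\end{equation*}
Shifting by $[-n]$ this reads $Rf_* \mathcal{O}_M[n] \cong R\mathcal{H}om_{\mathcal{O}_X}(Rf_* \mathcal{O}_M, \omega_X)$.

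Finally I would feed the splitting into both sides and compare cohomology sheaves. Writing $\mathcal{F}^i := R^i f_* \mathcal{O}_M$, the left-hand side becomes $\bigoplus_i \mathcal{F}^i[n-i]$ and the right-hand side $\bigoplus_i R\mathcal{H}om_{\mathcal{O}_X}(\mathcal{F}^i, \omega_X)[i]$; taking the degree-$(j-n)$ cohomology sheaves gives
\begin{equation*}
\mathcal{F}^j \cong \bigoplus_{i=0}^{n} \mathcal{E}xt^{\,j-n+i}_{\mathcal{O}_X}\big(\mathcal{F}^i, \omega_X\big).
\end{equation*}
Each $\mathcal{F}^i$ is torsion-free, so its support is all of $X$, and since $X$ is Cohen--Macaulay the sheaf $\mathcal{E}xt^p_{\mathcal{O}_X}(\mathcal{F}^i, \omega_X)$ is supported in codimension $\geq p$, hence is torsion for every $p \geq 1$. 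On the right-hand side the summand with $i < n-j$ vanishes for degree reasons, the summand with $i = n-j$ equals $\mathcal{H}om_{\mathcal{O}_X}(\mathcal{F}^{n-j}, \omega_X)$ and is torsion-free, and all summands with $i > n-j$ are torsion. Since the left-hand side $\mathcal{F}^j$ is torsion-free, the torsion summands must vanish, leaving
\begin{equation*}
R^j f_* \mathcal{O}_M = \mathcal{F}^j \cong \mathcal{H}om_{\mathcal{O}_X}\big(R^{n-j} f_* \mathcal{O}_M, \omega_X\big).
\end{equation*}
As $\omega_X$ is reflexive and $X$ is normal, the sheaf $\mathcal{H}om_{\mathcal{O}_X}(-, \omega_X)$ is reflexive, which proves the claim.

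The only genuinely non-formal ingredients are the two theorems of Takegoshi. I expect the main technical point to be checking that his torsion-freeness and splitting statements apply verbatim to the germ of a projective (hence Kähler) Lagrangian fibration, together with the availability of the analytic duality formalism; the homological bookkeeping in the last step is then routine. As a consistency check, the case $j=n$ recovers $R^n f_* \mathcal{O}_M \cong \mathcal{H}om_{\mathcal{O}_X}(\mathcal{O}_X, \omega_X) = \omega_X$, in agreement with Matsushita's formula $R^i f_* \mathcal{O}_M \cong \Omega^{[i]}_X$.
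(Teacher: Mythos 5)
Your argument is correct, but it takes a genuinely different route from the paper. The paper's proof is a one-line citation: since $\omega_M \cong \mathcal{O}_M$, the statement is exactly \cite[Thm.\ 6.5.($\beta$)]{Tak95}, which already asserts reflexivity (not merely torsion-freeness) of the higher direct images of the canonical sheaf under a proper K\"ahler morphism onto a normal base. You instead take from Takegoshi only the torsion-freeness, add the Koll\'ar-type decomposition $Rf_*\omega_M \cong \bigoplus_i R^if_*\omega_M[-i]$ and relative duality, and deduce reflexivity from the resulting self-duality $R^jf_*\mathcal{O}_M \cong \mathcal{H}om_{\mathcal{O}_X}(R^{n-j}f_*\mathcal{O}_M, \omega_X)$. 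The homological bookkeeping is sound: the use of $\omega_X^\bullet = \omega_X[n]$ is justified because $X$ is log terminal, hence Cohen--Macaulay, by Prop.\ \ref{prop-base-space-equidim-fibration}, and the codimension estimate on $\mathcal{E}xt^p(\mathcal{F}^i,\omega_X)$ is standard local duality. The one attribution to be careful about is the derived-category splitting: it is not in \cite{Tak95}; in the analytic setting it is due to Saito (decomposition theorem for proper K\"ahler morphisms), or to Koll\'ar since $f$ is here a projective morphism, so the input is available but should be cited correctly. What each approach buys: the paper's is shorter and avoids the decomposition theorem entirely; yours is more transparent about the mechanism and yields the extra self-duality statement, which is consistent with (and essentially equivalent to, via Cor.\ \ref{cor-matsushita}) the isomorphism $R^if_*\mathcal{O}_M \cong \Omega_X^{[i]}$ together with $\Omega_X^{[n]} \cong \omega_X$.
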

\begin{proof}
    Using that $\omega_M \cong \mathcal{O}_M$, this is just a special case of \cite[Thm.\ 6.5.($\beta$)]{Tak95}.
\end{proof}

Without the equivariance assertion, the following result is well-known:
\begin{cor}\emph{(Matsushita \cite{Mat05}, Ou \cite[Prop.\ 3.16]{Ou19})}
    \label{cor-matsushita}

    \noindent
    Let $f\colon M \rightarrow X$ be a germ of a projective Lagrangian fibration from a holomorphic symplectic manifold $(M, \sigma)$ onto a normal analytic variety $X$. Let $\mathcal{L}$ be an $f$-ample line bundle. Then there exist natural isomorphisms
    \[
    \theta^i_{\mathcal{L}} \colon R^if_*\mathcal{O}_M \rightarrow \Omega_X^{[i]}, \qquad \forall i\geq 0.
    \]
    In particular, if $G$ is a group acting equivariantly on $f$ such that the action of $G$ on $M$ is symplectic and preserves $c_1(\mathcal{L})$, then the isomorphisms $\theta^i_{\mathcal{L}}$ are $G$-equivariant for the natural actions of $G$ on either side.
\end{cor}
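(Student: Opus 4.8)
The plan is to recall the Matsushita--Ou construction of $\theta^i_{\mathcal{L}}$ in a form that makes transparent its dependence on the pair $(\sigma, c_1(\mathcal{L}))$ and \emph{nothing else}, and then to read off equivariance from naturality. Two structural facts organise the argument: both $R^if_*\mathcal{O}_M$ and $\Omega_X^{[i]}$ are reflexive (the former by Proposition \ref{prop-CM}, the latter by definition), and the non-smooth-fibration locus meets $X_{\mathrm{reg}}$ in a proper closed analytic subset, so that $f$ is a smooth abelian fibration over a dense open $X^\circ\subseteq X_{\mathrm{reg}}$. Hence I would construct the maps and describe them concretely over $X^\circ$, invoke \cite{Mat05,Ou19} for their isomorphy over all of $X_{\mathrm{reg}}$, and propagate everything to $X$ by reflexivity.

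I would first settle $i=1$. Over $X^\circ$ the contraction $v\mapsto \iota_v\sigma$ carries $\mathcal{T}_{M/X}$ isomorphically onto $f^*\Omega_X^1$, because the fibres are Lagrangian; pushing forward yields an identification $\beta_\sigma\colon f_*\mathcal{T}_{M/X}\xrightarrow{\sim}\Omega^1_{X^\circ}$. Independently, cup product with the relative class $c_1(\mathcal{L})\in R^1f_*\Omega^1_{M/X}$, followed by the contraction $\mathcal{T}_{M/X}\otimes\Omega^1_{M/X}\to\mathcal{O}_M$, gives a morphism $\gamma_{\mathcal{L}}\colon f_*\mathcal{T}_{M/X}\to R^1f_*\mathcal{O}_M$, which is the relative polarisation homomorphism on Lie algebras and hence an isomorphism over $X^\circ$ in characteristic zero. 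Setting $\theta^1_{\mathcal{L}}=\beta_\sigma\circ\gamma_{\mathcal{L}}^{-1}$ recovers the Matsushita--Ou morphism; that it extends to an isomorphism over all of $X_{\mathrm{reg}}$, in particular across the discriminant, is precisely their theorem, which I would cite rather than reprove.

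For $i\geq 2$ I would take exterior powers. Over $X_{\mathrm{reg}}$ the fibres are abelian, so the cup product $\wedge^iR^1f_*\mathcal{O}_M\to R^if_*\mathcal{O}_M$ is an isomorphism, and on the smooth locus $\wedge^i\Omega^1=\Omega^i$; thus $\theta^i_{\mathcal{L}}:=\wedge^i\theta^1_{\mathcal{L}}$ (with $\theta^0_{\mathcal{L}}=\mathrm{id}_{\mathcal{O}_X}$) yields isomorphisms over $X_{\mathrm{reg}}$, which extend uniquely to isomorphisms of reflexive sheaves on $X$ because $X_{\mathrm{sing}}$ has codimension $\geq 2$. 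The decisive point is that every arrow in this chain is canonical, depending only on $\sigma$ and on the class $c_1(\mathcal{L})$.

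This last observation delivers equivariance, which is the only genuinely new content. Since $f$ is $G$-equivariant, $G$ acts naturally on each sheaf above; $\beta_\sigma$ uses only $\sigma$, which is preserved as the action is symplectic ($g^*\sigma=\sigma$, so $\iota_{g_*v}\sigma=g_*(\iota_v\sigma)$); $\gamma_{\mathcal{L}}$ uses only $c_1(\mathcal{L})$, which is $G$-invariant by hypothesis; and exterior powers are functorial. Thus $\theta^i_{\mathcal{L}}$ intertwines the two $G$-actions over the dense open $X^\circ$, and because $\mathcal{H}om$ into a reflexive sheaf is torsion-free, this equality of morphisms persists over $X_{\mathrm{reg}}$ and then over $X$. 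I expect the main obstacle to be not a computation but a matching of data: one must verify that the Matsushita--Ou isomorphism is captured over $X^\circ$ exactly by $(\beta_\sigma,\gamma_{\mathcal{L}})$ and involves no auxiliary choice, and that ``symplectic'' is used in the strict sense $g^*\sigma=\sigma$ so that no character of $G$ twists $\beta_\sigma$ --- both of which hold in the \'etale quotient situation $M'\to M$ relevant to Theorem \ref{thm-main}; the analytically hard isomorphy over the discriminant is entirely subsumed in the cited results.
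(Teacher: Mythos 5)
Your proposal is correct and follows essentially the same route as the paper: construct $\theta^1_{\mathcal{L}}$ over the locus where $f$ is smooth from the Lagrangian contraction with $\sigma$ together with the polarisation determined by $c_1(\mathcal{L})$ (the paper phrases the latter as relative Hard Lefschetz and works with the dual $(f_*\Omega^1_{V/U})^\vee$ rather than $f_*\mathcal{T}_{M/X}$, but this is the same data), take exterior powers using that the smooth fibres are abelian, cite Matsushita for the extension over $X^{\mathrm{reg}}$, extend to $X$ by reflexivity of both sides (Takegoshi for $R^if_*\mathcal{O}_M$), and deduce equivariance from the canonicity of the construction plus torsion-freeness. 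The only slip is your assertion that the fibres over all of $X^{\mathrm{reg}}$ are abelian --- they are abelian only over the open set where $f$ is smooth --- but since you cite Matsushita--Ou for the extension across the discriminant anyway, this does not affect the argument.
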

\begin{proof}
    We follow the proof in \cite[Thm.\ 1.2]{Mat05}: Let $U\subseteq X^{\mathrm{reg}}$ be the maximal Zariski open subset such that $f\colon V := f^{-1}(U) \rightarrow U$ is smooth. Since $f$ is Lagrangian, we have a commutative diagram
    \[
    \begin{tikzcd}
            0 
            \arrow{r} 
            &
            \mathcal{T}_{V/U}
            \arrow{r}
            \arrow[dashrightarrow]{d}{\sigma}
            &
            \mathcal{T}_V
            \arrow{r}
            \arrow{d}{\sigma}
            & 
            f^*\mathcal{T}_U
            \arrow{r}
            \arrow[dashrightarrow]{d}{\sigma}
            &  
            0
            \\
            0 
            \arrow{r} 
            &
            f^*\Omega_U^1
            \arrow{r}
            &
            \Omega_V^1
            \arrow{r}
            & 
            \Omega_{V/U}^1
            \arrow{r}
            & 
            0 
        \end{tikzcd}
    \]
    in which all vertical arrows are isomorphisms. In particular,
    \begin{equation}
        \Omega^1_X\big|_U 
        \cong \mathcal{T}_U^{\vee} 
        \cong \Big(f_*f^*\mathcal{T}_U\Big)^{\vee} 
        \cong \Big(f_*\Omega_{V/U}\Big)^\vee.
        \label{eq-matsushita}
    \end{equation}
    Note that \eqref{eq-matsushita} is equivariant with respect to the action of any group acting equivariantly on $f$ and symplectically on $M$. Additionally, by relative Hard Lefschetz, $c_1(\mathcal{L})$ determines a natural isomorphism
    \begin{equation}
        \Big(f_*\Omega_{V/U}\Big)^\vee \cong R^1f_*\mathcal{O}_M\big|_U,
        \label{eq-matsushita-2}
    \end{equation}
    which is equivariant with respect to the action of any group acting equivariantly on $f$ and which preserves $c_1(\mathcal{L})$. In effect, we obtain a natural isomorphism
    \[
    \theta^1_{\mathcal{L}|_U}\colon R^1f_*\mathcal{O}_M\big|_U \rightarrow \Omega_X^1\big|_U.
    \]
    Since the fibres of $f$ are Abelian varieties \cite[Lem.\ 1.5]{HM22}, also
    \begin{equation}
        \theta^i_{\mathcal{L}|_U} := \wedge^i \theta^1_{\mathcal{L}|_U}\colon R^if_*\mathcal{O}_M\big|_U \cong \wedge^i R^1f_*\mathcal{O}_M\big|_U\rightarrow \Omega_X^i\big|_U
        \label{eq-matsushita-3}
    \end{equation}
    is an isomorphism for any $i\geq 0$ and is satisfies the required equivariance property since so do \eqref{eq-matsushita} and \eqref{eq-matsushita-2}. Matsushita has shown \cite[Lem.\ 3.1., Prop.\ 3.3]{Mat05} that the isomorphisms \eqref{eq-matsushita-3} extend to isomorphisms
    \begin{equation}
        \theta^i_{\mathcal{L}|_{X^{\mathrm{reg}}}}\colon R^if_*\mathcal{O}_M\big|_{X^{\mathrm{reg}}} \rightarrow \Omega_{X^{\mathrm{reg}}}^i.
        \label{eq-matsushita-4}
    \end{equation}
    Strictly speaking, \cite[Lem.\ 3.1., Prop.\ 3.3]{Mat05} is formulated only in case $X$ is projective but the proof is completely local in the analytic topology. In any case, \eqref{eq-matsushita-4} extends uniquely to a isomorphisms
    \begin{equation}
        \theta^i_{\mathcal{L}}\colon R^if_*\mathcal{O}_M \rightarrow \Omega_{X}^{[i]},
        \label{eq-matsushita-5}
    \end{equation}
    since the sheaves in \eqref{eq-matsushita-5} are both reflexive, see Prop.\ \ref{prop-CM}. Then $\theta^i_{\mathcal{L}}$ is automatically equivariant with respect to any group acting equivariantly on $f$ and preserving $\sigma$ and $c_1(\mathcal{L})$, since both sheaves are torsion-free and since $\theta^i_{\mathcal{L}}\big|_U$ is equivariant.
\end{proof}

\subsection{Baum--Fulton--Quart Lefschetz-Riemann--Roch formula}\label{subsection_singular LRR}

The following result is the main ingredient in our proof of Theorem \ref{thm-main}:
\begin{thm}\emph{(Singular Lefschetz-Riemann--Roch, Baum--Fulton--Quart \cite{BFQ_SingularLefschetzRiemannRoch})}
    \label{thm-SingularLefschetzRiemannRoch}
    Let $Y$ be a projective scheme and let $\phi\colon Y \rightarrow Y$ be an automorphism of finite order. If the fixed point scheme $Y^\phi = \emptyset$ is empty, then
    \[
    \sum_{i} (-1)^i\ \mathrm{Tr}\big(\phi^*|_{H^i(Y, \mathcal{O}_Y)}\big) = 0.
    \]
\end{thm}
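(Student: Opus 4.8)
The plan is to deduce the statement from the concentration theorem in equivariant $K$-theory, which is the true content of \cite{BFQ_SingularLefschetzRiemannRoch}. Let $G = \langle \phi \rangle$ be the finite cyclic group generated by $\phi$, so that $Y$ becomes a projective $G$-scheme. The left-hand side is the holomorphic Lefschetz number of $\phi$ acting on the structure sheaf, and the first step is to recognise it as an evaluation of a class in the representation ring $R(G)$. Indeed, each $H^i(Y, \mathcal{O}_Y)$ is a finite-dimensional $G$-representation, so the equivariant Euler characteristic
\[
\chi_G(Y, \mathcal{O}_Y) := \sum_i (-1)^i \big[H^i(Y, \mathcal{O}_Y)\big] \in R(G)
\]
is well-defined, and the character homomorphism $\mathrm{ev}_\phi \colon R(G) \to \C$, $[V] \mapsto \mathrm{Tr}(\phi^*|_V)$, sends $\chi_G(Y, \mathcal{O}_Y)$ to precisely the left-hand side of the asserted formula.

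Next I would realise $\chi_G(Y, \mathcal{O}_Y)$ as an equivariant pushforward. Working in the Grothendieck group $K_0^G(Y)$ of $G$-equivariant coherent sheaves, one has $\chi_G(Y, \mathcal{O}_Y) = p^G_*[\mathcal{O}_Y]$, where $p^G_* \colon K_0^G(Y) \to K_0^G(\mathrm{pt}) = R(G)$ is the proper pushforward along $p \colon Y \to \mathrm{pt}$ (here projectivity of $Y$ is used). Since $p^G_*$ is $R(G)$-linear, it stays $R(G)$-linear after localising at any prime. Let $\mathfrak{p}_\phi = \ker(\mathrm{ev}_\phi \otimes_\Z \C)$ be the prime corresponding to $\phi$ in $\mathrm{Spec}\, R(G)_\C$; by construction $\mathrm{ev}_\phi$ factors through the residue field of the localisation $R(G)_{\mathfrak{p}_\phi}$. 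Consequently, it suffices to show that the image of $[\mathcal{O}_Y]$ in the localised group $K_0^G(Y)_{\mathfrak{p}_\phi}$ vanishes.

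This is exactly where the hypothesis $Y^\phi = \emptyset$ enters, via the \emph{concentration theorem}: for the closed embedding $i \colon Y^\phi \hookrightarrow Y$ of the fixed-point scheme, the pushforward
\[
i_* \colon K_0^G(Y^\phi)_{\mathfrak{p}_\phi} \longrightarrow K_0^G(Y)_{\mathfrak{p}_\phi}
\]
is an isomorphism. Granting this, $Y^\phi = \emptyset$ forces $K_0^G(Y)_{\mathfrak{p}_\phi} = 0$, hence $[\mathcal{O}_Y]$ maps to $0$ there, and applying the localised $p^G_*$ followed by the residue map $\mathrm{ev}_\phi$ yields the vanishing of the Lefschetz number.

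The main obstacle is the concentration theorem itself for the (possibly singular) scheme $Y$; this is the substantive content of \cite{BFQ_SingularLefschetzRiemannRoch} (and, in the purely algebraic category, of Thomason's work). I would establish it by dévissage: using the localisation exact sequence for equivariant coherent $K$-theory attached to a $G$-invariant closed subscheme $Z \subseteq Y$ and its open complement, together with homotopy invariance of $K_0^G$ along $G$-equivariant affine fibrations, noetherian induction reduces the statement to the case of a single orbit $Y = G/H$ in which no conjugate of $\phi$ lies in $H$. There $K_0^G(G/H) \cong R(H)$ as an $R(G)$-module via restriction, and the claim becomes the representation-theoretic fact that $\mathfrak{p}_\phi$ does not lie in the image of $\mathrm{Spec}\, R(H) \to \mathrm{Spec}\, R(G)$: by Segal's description of $\mathrm{Spec}\, R(G)$ through conjugacy classes of elements of finite order, the support of $\mathfrak{p}_\phi$ records the conjugacy class of $\phi$, which does not meet $H$. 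Hence $R(H)_{\mathfrak{p}_\phi} = 0$, and reassembling these vanishings along the stratification gives $K_0^G(Y)_{\mathfrak{p}_\phi} = 0$ as soon as $\phi$ acts without fixed points.
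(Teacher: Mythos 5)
Your argument is correct, but it takes a genuinely different route from the paper. The paper's proof is a two-line deduction from the ``Main Result'' of \cite{BFQ_SingularLefschetzRiemannRoch}: it takes as a black box the natural transformation $L_Y\colon K_0^{\mathrm{eq}}(Y)\to K_0^{\mathrm{abs}}(Y^\phi)\otimes_\Z\C$, covariant for proper morphisms, and simply observes that its target vanishes when $Y^\phi=\emptyset$, so that pushing $(\mathcal{O}_Y,\phi^*)$ to a point kills the Lefschetz number. You instead black-box (and partially reprove) the concentration theorem of Segal--Thomason: you localise $K_0^G(Y)$ at the prime $\mathfrak{p}_\phi\subseteq R(G)_\C$, use $R(G)$-linearity of $p_*^G$, and conclude from $K_0^G(Y)_{\mathfrak{p}_\phi}=0$. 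Both arguments are sound and structurally parallel (in each case the whole content is ``the receiving group is zero because the fixed locus is empty''); indeed the BFQ homomorphism $L_Y$ is essentially an explicit inverse to the concentration isomorphism. What your route buys is a more self-contained and conceptually transparent proof, since for the vanishing statement you only need the easy half of concentration: the right-exact localisation sequence $K_0^G(Z)\to K_0^G(Y)\to K_0^G(U)\to 0$ plus noetherian induction suffices to show $K_0^G(Y)_{\mathfrak{p}_\phi}=0$, with no need for the full isomorphism statement. (Two small points to tighten: pass to $Y_{\mathrm{red}}$ first, which is harmless for coherent-sheaf $K$-theory, and note that since $G=\langle\phi\rangle$ is abelian the condition ``no conjugate of $\phi$ lies in $H$'' is just $\phi\notin H$, i.e.\ $H\subsetneq G$; the genuinely delicate step you elide is that a general invariant open stratum of a $G$-scheme is induced from a proper stabiliser subgroup, which is where Thomason's work does real labour.) What the paper's route buys is brevity, at the cost of citing a deeper construction.
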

Note that Baum--Fulton--Quart call a pair $(Y, \phi)$, where $Y$ is a projective scheme $Y$ and $\phi\colon Y \rightarrow Y$ is an automorphism of finite order an \emph{equivariant variety}, even thought $Y$ need not be reduced or irreducible, cf.\ \cite[Introduction]{BFQ_SingularLefschetzRiemannRoch}. The fixed point scheme $Y^\phi$ is defined to be the fibre product
\[
        \begin{tikzcd}
            Y^\phi
            \arrow{r} 
            \arrow{d}
            & 
            Y
            \arrow{d}{\Delta} 
            \\
            Y
            \arrow{r}{(\mathrm{id}, \phi)} 
            &
            Y\times Y.
        \end{tikzcd}
\]
In this situation, \cite{BFQ_SingularLefschetzRiemannRoch} construct a natural group homomorphism
\begin{equation}
    L_Y\colon K_0^{\mathrm{eq}}(Y) \rightarrow K_0^{\mathrm{abs}}(Y^\phi)\otimes_\Z \C.
    \label{eq-L}
\end{equation}
Here, $K_0^{\mathrm{eq}}(Y)$ is the Grothendieck group of equivariant sheaves on $Y$, i.e.\ pairs $(\mathcal{F}, \alpha)$, where $\mathcal{F}$ is a coherent sheaf on $Y$ and $\alpha\colon \mathcal{F} \rightarrow \phi^*\mathcal{F}$ is an $\mathcal{O}_{Y}$-linear map \cite[0.2]{BFQ_SingularLefschetzRiemannRoch} and $K_0^{\mathrm{abs}}(Y^\phi)$ denotes the ordinary $K$-groups of $Y^\phi$. Concretely, in case $Y = \{\ast\}$ is a point, $K_0^{\mathrm{eq}}(\{\ast\})$ is the Grothendieck group of finite dimensional $\C$-vector spaces with an endomorphism, $K_0^{\mathrm{abs}}( \{ \ast \})\otimes_\Z \C \cong \C$ and
\begin{equation}
    L_{\{ \ast \}}\colon K_0^{\mathrm{eq}}(\{\ast\}) \rightarrow \C, \qquad (V, \alpha) \mapsto \mathrm{Tr}\big(\alpha|_V\big),
    \label{eq-L-star}
\end{equation}
see \cite[0.4]{BFQ_SingularLefschetzRiemannRoch}. Naturality of \eqref{eq-L} means, that $L_\bullet$ is covariant with respect to proper morphisms.

\begin{proof}[Proof of Thm.\ \ref{thm-SingularLefschetzRiemannRoch}]
    Let $p\colon Y \rightarrow \{\ast\}$ be the structure morphism. By \cite[Main Result]{BFQ_SingularLefschetzRiemannRoch}, the diagram
    \begin{equation}
        \begin{tikzcd}
            K_0^{\mathrm{eq}}(Y)
            \arrow{r}{L_Y} 
            \arrow{d}{p_*}
            & 
            K_0^{\mathrm{abs}}(Y^\phi)\otimes_\Z \C
            \arrow{d}{p_*} 
            \\
            K_0^{\mathrm{eq}}(\{\ast\})
            \arrow{r}{L_{\{ \ast \}}}
            &
            K_0^{\mathrm{abs}}( \{ \ast \})\otimes_\Z \C.
        \end{tikzcd}
        \label{eq-BFQ}
    \end{equation}
    commutes. We consider the element $(\mathcal{O}_Y, \phi^*) \in K_0^{\mathrm{eq}}(Y)$. As $Y^\phi = \emptyset$, we infer from the commutativity of \eqref{eq-BFQ} that
    \[
    0 = L_{\{ \ast \}}\Big( p_*\big(\mathcal{O}_Y, \phi^* \big) \Big) = \sum_i (-1)^i \ \mathrm{Tr}\big(\phi^*|_{H^i(Y, \mathcal{O}_Y)}\big).
    \]
    Here, the second equality follows from \eqref{eq-L-star} and \cite[0.2]{BFQ_SingularLefschetzRiemannRoch}
\end{proof}

\section{Proof of the main results}
\label{sec-proofs}

In this section we prove Thm. \ref{thm-main}, Cor.\ \ref{cor-open-LF-n=2} and Thm.\ \ref{thm-open-LF-n=3}. In fact, these results are all immediate consequences of the following formula:
\begin{thm}
\label{thm-main-formula}
    Let $f\colon M \rightarrow X$ be a germ of a projective Lagrangian fibration from a holomorphic symplectic manifold $(M, \sigma)$ onto a complex manifold $X$. Let $G$ be a finite group acting equivariantly on $f$. Assume that the action of $G$ on $M$ is symplectic and fixed point free. If $x\in X$ is a fixed point for the action of $G$ on $X$, then
    \begin{equation}
         \det\big(\mathrm{id}_{T_xX} - dg|_x\big) = 0, \qquad \forall g\in G.
         \label{eq-main-formula}
    \end{equation}
\end{thm}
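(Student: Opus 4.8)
The plan is to apply the Baum--Fulton--Quart Lefschetz-Riemann--Roch formula (Theorem \ref{thm-SingularLefschetzRiemannRoch}) to a single fixed fibre of $f$. Fix $g \in G$ and let $Y = f^{-1}(x)$ be the scheme-theoretic fibre over the fixed point $x \in X$. Since $g$ fixes $x$ and acts equivariantly on $f$, it restricts to an automorphism $\phi := g|_Y \colon Y \to Y$ of finite order. Because the $G$-action on $M$ is fixed point free, $\phi$ has no fixed points, so $Y^\phi = \emptyset$. The fibre $Y$ is projective (the fibration is projective), so Theorem \ref{thm-SingularLefschetzRiemannRoch} applies and yields
\[
\sum_i (-1)^i\ \mathrm{Tr}\big(\phi^*|_{H^i(Y, \mathcal{O}_Y)}\big) = 0.
\]
The entire strategy is then to reinterpret this alternating sum of traces on the fibre in terms of data on the base $X$ at the point $x$, and to show it equals (up to a nonzero factor) the quantity $\det(\mathrm{id}_{T_xX} - dg|_x)$ whose vanishing we want.

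\smallskip

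First I would relate the cohomology of the fibre to the stalks of the higher direct images. By cohomology and base change, since $X$ is smooth and the sheaves $R^if_*\mathcal{O}_M$ are locally free on the smooth locus (indeed, by Corollary \ref{cor-matsushita} they are isomorphic to $\Omega_X^{[i]} = \Omega_X^i$, which are locally free on a manifold), the fibre of $R^if_*\mathcal{O}_M$ at $x$ computes $H^i(Y, \mathcal{O}_Y)$, compatibly with the $G$-action. Hence
\[
\sum_i (-1)^i\ \mathrm{Tr}\big(\phi^*|_{H^i(Y, \mathcal{O}_Y)}\big)
= \sum_i (-1)^i\ \mathrm{Tr}\big(g^*|_{(R^if_*\mathcal{O}_M)(x)}\big).
\]
Next, using the $G$-equivariant isomorphisms $\theta^i_{\mathcal{L}}\colon R^if_*\mathcal{O}_M \xrightarrow{\sim} \Omega_X^i$ of Corollary \ref{cor-matsushita}, I would transport the trace computation to the cotangent side, obtaining
\[
\sum_i (-1)^i\ \mathrm{Tr}\big(g^*|_{(R^if_*\mathcal{O}_M)(x)}\big)
= \sum_i (-1)^i\ \mathrm{Tr}\big(\wedge^i (dg|_x)^\vee \big|_{\wedge^i \Omega_X^1(x)}\big).
\]
The key linear-algebra identity is that for any endomorphism $A$ of a finite-dimensional vector space $V$, one has $\sum_i (-1)^i \mathrm{Tr}(\wedge^i A) = \det(\mathrm{id}_V - A)$. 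Applying this with $V = \Omega_X^1(x) \cong (T_xX)^\vee$ and $A = (dg|_x)^\vee$, and noting that $\det(\mathrm{id} - (dg|_x)^\vee) = \det(\mathrm{id}_{T_xX} - dg|_x)$, identifies the alternating sum exactly with the desired determinant. Combining with the vanishing from Theorem \ref{thm-SingularLefschetzRiemannRoch} gives \eqref{eq-main-formula}.

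\smallskip

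The main obstacle I anticipate is the base-change and equivariance bookkeeping in the first step: one must verify that the identification of $H^i(Y,\mathcal{O}_Y)$ with the fibre $(R^if_*\mathcal{O}_M)(x)$ is valid and $g$-equivariant. Here the smoothness of $X$ is essential --- it guarantees, via Corollary \ref{cor-matsushita} and the local freeness of $\Omega_X^i$, that base change holds and that no higher $\mathrm{Tor}$ or derived-pullback correction terms appear. One should be careful that $\theta^i_{\mathcal{L}}$ is equivariant for the \emph{given} $g$, which requires that $g$ act symplectically and preserve $c_1(\mathcal{L})$; the symplectic hypothesis is given, while $G$-invariance of $c_1(\mathcal{L})$ can be arranged by averaging $\mathcal{L}$ over the finite group $G$. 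Once these compatibilities are in place, the remaining steps are the purely formal linear-algebra identity and an unwinding of the natural $G$-action on $\Omega_X^1(x)$ as the dual of the differential $dg|_x$, both of which are routine.
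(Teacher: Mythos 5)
Your proposal is correct and follows essentially the same route as the paper's proof: averaging $\mathcal{L}$ to make $c_1(\mathcal{L})$ $G$-invariant, identifying $H^i(F,\mathcal{O}_F)$ with the fibre of $R^if_*\mathcal{O}_M \cong \Omega_X^i$ via cohomology and base change, applying the linear-algebra identity $\sum_i(-1)^i\mathrm{Tr}(\wedge^i A)=\det(\mathrm{id}-A)$, and invoking the Baum--Fulton--Quart formula with $F^g=\emptyset$ (which the paper justifies by the closed immersion $F^g\hookrightarrow M^g$). The only cosmetic difference is that the paper tracks the left action as $(g^{-1})^*$, which is immaterial since the conclusion is quantified over all $g\in G$.
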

\begin{proof}
    Denote $n := \dim X$ and let $\mathcal{L}$ be an $f$-ample line bundle on $M$. Replacing $\mathcal{L}$ by 
    \[
    \bigotimes_{g\in G} g^*\mathcal{L},
    \]
    we may assume that $c_1(\mathcal{L})$ is $G$-invariant. Let $F:=f^{-1}(x)$ be the scheme-theoretic fibre, which is a projective scheme. We claim that the natural morphisms
    \begin{equation}
        R^if_*\mathcal{O}_M\big|_x \rightarrow H^i(F, \mathcal{O}_F)
        \label{eq-basechange}
    \end{equation}
    are $G$-equivariant isomorphisms for any $i\geq 0$. Indeed, the sheaves $R^if_*\mathcal{O}_M \cong \Omega_X^i$ are locally free by Cor.\ \ref{cor-matsushita}, and it follows from Cohomology and Base Change \cite[Thm.\ III.3.4, Cor.\ III.3.10]{BS76} that \eqref{eq-basechange} is an isomorphism for any $i$. Here, we used that $f$ is flat since it is equidimensional and since $M$ and $X$ are smooth, see \cite[p.\ 154]{Fis76}. The $G$-equivariance follows from Cor.\ \ref{cor-matsushita}. Note that the natural (left) action of $G$ on either side is given by
    \[
    \big(g^{-1}\big)^*\colon H^i(F, \mathcal{O}_F) \rightarrow H^i(F, \mathcal{O}_F), 
    \quad \mathrm{resp.\ } \quad
    \big(g^{-1}\big)^*\colon \Omega_X^i\big|_x \rightarrow \Omega_X^i\big|_x,
    \quad \forall g\in G.
    \]
    It follows that
    \begin{align}
        \begin{split}
           \sum_{i=0}^n (-1)^i\ \mathrm{Tr}\Big( \big(g^{-1}\big)^*|_{H^i(F, \mathcal{O}_F)} \Big)
            & = \sum_{i=0}^n (-1)^i\ \mathrm{Tr}\Big( \big(g^{-1}\big)^*|_{\Omega_X^i\big|_x}\Big) \\
            & = \sum_{i=0}^n (-1)^i\ \mathrm{Tr}\Big( \wedge^i dg|_x\Big)\\
            & = \det\big(\mathrm{id}_{T_xX} - dg|_x\big),
        \end{split}
        \label{eq-formula-1}
    \end{align}
    where we used the well-known formula
    \[
    \sum_{i=0}^n (-1)^i\ \mathrm{Tr}\Big( \wedge^i\phi\Big)
    = \det\big(\mathrm{id}_{V} - \phi\big),
    \]
    which is valid for any endomorphism $\phi \colon V \rightarrow V$ of an $n$-dimensional vector space $V$. Finally,
    \begin{equation}
        \sum_{i=0}^n (-1)^i\ \mathrm{Tr}\Big( \big(g^{-1}\big)^*|_{H^i(F, \mathcal{O}_F)} \Big) = 0
        \label{eq-formula-2}
    \end{equation}
    by Thm.\ \ref{thm-SingularLefschetzRiemannRoch}. Indeed, by the universal property of fibre products, for any $g\in G$ we have a natural closed immersion $F^g \hookrightarrow M^g$. As $M^g=\emptyset$, we deduce that $F^g = \emptyset$ for any $g\neq 1$, as required. Combining \eqref{eq-formula-1} and \eqref{eq-formula-2} yields the sought-after formula \eqref{eq-main-formula}.
\end{proof}

\subsection{Proof of Theorem \ref{thm-HX}, Theorem \ref{thm-main} and Corollary \ref{cor-open-LF-n=2}}

\begin{proof}[Proof of Thm.\ \ref{thm-main}]
    Let $x\in X$ be an isolated quotient singularity. By \cite{Prill67}, up to shrinking $X$, there exists a finite subgroup $G\subseteq \mathrm{GL}_n(\C^n)$ without reflections and a $G$-invariant analytic open neighborhood $X'\subseteq \C^n$ such that $X\cong X'/G$. In fact, since $x$ is an isolated singularity, $(\C^n)^g = \{0\}$ for any element $g\in G\setminus \{1\}$. Equivalently,
    \begin{equation}
        \det\big(\mathrm{id}_{\C^n} - g\big) \neq 0, \qquad \forall g\in G\setminus \{1\}.
        \label{eq-det-1}
    \end{equation}
    Let $M'$ denote the normalisation of the fibre product $M\times_X X'$. The situation is summarised in the following diagram:
    \[
        \begin{tikzcd}
            M'
            \arrow{r}{\psi} 
            \arrow{d}{f'}
            & 
            M
            \arrow{d}{f} 
            \\
            X'
            \arrow{r}{\pi} 
            &
            X.
        \end{tikzcd}
    \]
    Since $G$ does not contain a reflection, $\pi$ is \'etale in codimension one. Since $f$ is equidimensional, also $\psi$ is \'etale in codimension one. By purity of the branch locus \cite[Thm.\ 12.3.3]{Nem22}, $\psi$ is \'etale and $M'$ is smooth. In particular, $M'$ is symplectic and $f'\colon M' \rightarrow X'$ is a Lagrangian fibration. Let $x' = 0\in X'$ be the unique point with $\pi(x') = x$. Then $x'$ is a fixed point for the action of $G$ on $X'$. By Theorem \ref{thm-main-formula},
    \begin{equation}
    \det\big(\mathrm{id}_{T_{x'}X'} - dg|_{x'} \big) = \det\big(\mathrm{id}_{\C^n} - g\big) = 0, \qquad \forall g\in G.
        \label{eq-det-2}
    \end{equation}
    From \eqref{eq-det-1} and \eqref{eq-det-2} together we infer that $G = \{1\}$, indicating that $X\cong X'$ is smooth.
\end{proof}

\begin{proof}[Proof of Cor.\ \ref{cor-open-LF-n=2}]
    As in the proof of \cite[Prop. 1.10]{HM22}, there exists a closed submanifold $Z\subseteq M$ such that restriction $f|_Z\colon Z \rightarrow X$ is finite. It follows that $X$ has (isolated) quotient singularities \cite[Satz 2.8]{Brie68}. Thus, the result follows from Thm.\ \ref{thm-main}.
\end{proof}

\begin{proof}[Proof of Thm.\ \ref{thm-HX}]
    By Cor.\ \ref{cor-open-LF-n=2}, $X$ is a smooth projective surface. Moreover, by \cite[Thm.\ 2]{Mat99}, $X$ is a Fano variety of Picard number one. It follows that $X\cong \mP^2$.
\end{proof}

\subsection{Proof of Theorem \ref{thm-open-LF-n=3}}

In this section we prove Thm.\ \ref{thm-open-LF-n=3}. We will require the following Proposition:

\begin{prop}\emph{(cf.\ \cite[Prop. 9.7]{Kunz86})}
\label{prop-Kunz}

\noindent
    Let $X$ be a complex analytic variety which is locally a complete intersection. 
    \begin{itemize}
        \item[\emph{(1)}] If $X$ is smooth in codimension two, then $\Omega_X^1$ is reflexive.
        \item[\emph{(2)}] If $\Omega_X^1$ is Cohen-Macaulay, then $X$ is smooth.
    \end{itemize}
\end{prop}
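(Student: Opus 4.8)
The statement to prove is Proposition~\ref{prop-Kunz}, a purely commutative-algebra fact about the Kähler differentials of a local complete intersection (lci). Since the claim is étale-local (indeed, both reflexivity and Cohen--Macaulayness of a coherent sheaf, as well as the lci and smoothness properties, can be checked on the completed local rings), the plan is to reduce immediately to the algebraic setting: let $R = \mathcal{O}_{X,x}$ be the local ring at a point, which by hypothesis is a quotient $R = S/I$ of a regular local ring $S$ of dimension $N$ by an ideal generated by a regular sequence $g_1,\dots,g_c$, where $c = N - \dim R$. I would then study $\Omega^1_R = \Omega^1_{X,x}$ via the conormal (second fundamental) exact sequence
\[
I/I^2 \xrightarrow{\ d\ } \Omega^1_S \otimes_S R \longrightarrow \Omega^1_R \longrightarrow 0,
\]
which for an lci is moreover left-exact modulo torsion, and the point is that $I/I^2$ is free of rank $c$ over $R$ and $\Omega^1_S\otimes_S R$ is free of rank $N$.

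\textbf{Part (1).} Assuming $X$ is smooth in codimension two, I would show $\Omega^1_R$ is reflexive by means of the standard Serre-type criterion: a coherent sheaf $\mathcal{F}$ on a normal (here Cohen--Macaulay, hence $S_2$) variety is reflexive if and only if it is torsion-free and satisfies the condition that for every point $y$ of codimension $\geq 2$ the local cohomology $H^i_{\mathfrak{m}_y}(\mathcal{F}_y)$ vanishes for $i\leq 1$, i.e.\ $\mathrm{depth}_{\mathcal{O}_{X,y}}\mathcal{F}_y \geq 2$. On the smooth codimension-$\leq 2$ locus $\Omega^1_X$ is locally free, hence automatically reflexive there; so the content is that $\Omega^1_X$ is reflexive because it agrees with its double dual away from codimension three, and the lci hypothesis forces $\Omega^1_R$ to have no embedded issues. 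Concretely I would argue that the conormal sequence realises $\Omega^1_R$ as the cokernel of a map between free modules whose degeneracy locus is exactly the singular locus $\mathrm{Sing}(X)$, which has codimension $\geq 3$ by assumption; a cokernel of a map of free modules that is injective with torsion-free cokernel in codimension $\leq 2$ is reflexive once the base is $S_2$ of depth considerations in codimension two. This is precisely the mechanism of \cite[Prop.~9.7]{Kunz86}, so I would cite it for the detailed bookkeeping and emphasise the depth estimate as the crux.

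\textbf{Part (2).} This is the direction I expect to be the main obstacle and the one genuinely used later. Assume $\Omega^1_R$ is Cohen--Macaulay; I must deduce that $R$ is regular. The strategy is a depth/projective-dimension count via the conormal sequence. Suppose for contradiction $X$ is singular at $x$, so the differential $d\colon I/I^2 \to \Omega^1_S\otimes R$ drops rank at $\mathfrak{m}$, i.e.\ the induced map on fibres $I/\mathfrak{m}I \to \Omega^1_S\otimes \kappa(x)$ is \emph{not} injective. Then $\Omega^1_R \otimes \kappa(x)$ has dimension strictly greater than $\dim R$, which bounds the minimal number of generators of $\Omega^1_R$ from below and, by the lci presentation, forces $\Omega^1_R$ to have a nontrivial torsion submodule or, more usefully, to fail the Auslander--Buchsbaum depth bound. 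The clean way to run the contradiction is: the Cohen--Macaulay hypothesis gives $\mathrm{depth}\,\Omega^1_R = \dim R$, so by Auslander--Buchsbaum (over the regular ring $S$) the projective dimension of $\Omega^1_R$ as an $S$-module equals $N - \dim R = c$; one then computes from the presentation that if $X$ is singular this projective dimension must exceed $c$, a contradiction. Here the delicate part is controlling the syzygies of the cokernel $\Omega^1_R$ precisely enough to read off $\mathrm{pd}_S \Omega^1_R$, and ensuring the rank-drop of $d$ translates into the sharp inequality on projective dimension. I would structure this as: (i) reduce to the completed local lci presentation; (ii) establish the Auslander--Buchsbaum equality from the CM hypothesis; (iii) show that regularity of $R$ is equivalent to $\Omega^1_R$ being free, i.e.\ to $\mathrm{pd}_S\Omega^1_R = c$ together with the right rank, and that singularity strictly increases this projective dimension. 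As with Part~(1), the bookkeeping is exactly \cite[Prop.~9.7]{Kunz86}, so in the write-up I would invoke it while making the depth and Auslander--Buchsbaum inputs explicit, since those are what interface with the Cohen--Macaulayness established for $\Omega^{[1]}_X$ in Proposition~\ref{prop-base-space-equidim-fibration}.
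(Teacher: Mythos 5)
Your proposal follows essentially the same route as the paper: both reduce to the local ring $\mathcal{O}_{X,x}$ of the (lci) analytic variety and invoke \cite[Prop.~9.7]{Kunz86}, and your sketch of the underlying conormal-sequence / Auslander--Buchsbaum mechanism behind that result is correct. The one point to treat with care is your opening reduction ``to the completed local rings'': K\"ahler differentials do not commute with completion, and $\Omega^1_{X,x}$ here is the finite analytic differential module rather than the module of K\"ahler differentials of the abstract (or completed) ring, which is why the paper cites \cite{Vet70} for part (1) in the analytic category and applies \cite[Prop.~9.7]{Kunz86} directly to the Noetherian local ring $\mathcal{O}_{X,x}$ for part (2), concluding smoothness from regularity via \cite[Prop.~5.1, 5.2]{DG67}.
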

\begin{proof}
    In case $X$ is algebraic, this is a special case of \cite[Prop. 9.7]{Kunz86}. In the general analytic case, $(1)$ follows from \cite{Vet70}. Regarding $(2)$, let $x\in X$ be a point. Denote $A := \mathcal{O}_{X, x}$, which is a Noetherian local ring. By \cite[Prop. 9.7]{Kunz86}, $A$ is regular. Consequently, $X$ is smooth at $x$ \cite[Prop.\ 5.1, 5.2]{DG67}.
\end{proof}

\begin{proof}[Proof of Thm.\ \ref{thm-open-LF-n=3}]
    By a result of Reid \cite[Cor.\ 5.39]{KM98}, there exists a terminal threefold $X'$ with an isolated cDV singularity $x'\in X'$ and a finite group $G$ acting on $X'$ without fixed points on $X'\setminus \{x'\}$ such that $X\cong X'/G$. As in the proof of Thm.\ \ref{thm-main}, we denote by $M'$ the normalisation of the fibre product $M\times_X X'$. The situation is summarised in the following diagram:
    \[
        \begin{tikzcd}
            M'
            \arrow{r}{\psi} 
            \arrow{d}{f'}
            & 
            M
            \arrow{d}{f} 
            \\
            X'
            \arrow{r}{\pi} 
            &
            X.
        \end{tikzcd}
    \]
    By assumption, $\pi$ is \'etale away from $x$. Since $f$ is equidimensional, $\psi$ is \'etale in codimension one. By purity of the branch locus, $\psi$ is \'etale and $M'$ is smooth. In particular, $M'$ is symplectic and $f'\colon M' \rightarrow X'$ is a Lagrangian fibration. Now, recall that cDV singularities are hypersurface singularities \cite[Def.\ 2.1]{Reid80}. Since $X'$ is smooth in codimension two, $\Omega^1_{X'}$ is reflexive, see Prop.\ \ref{prop-Kunz}. By Prop.\ \ref{prop-base-space-equidim-fibration},
    \[
    \Omega_{X'}^1 = \Omega_{X'}^{[1]}
    \]
    is in fact a Cohen-Macaulay sheaf. Thus, $X'$ is smooth by Prop.\ \ref{prop-Kunz}. We conclude that $x\in X = X'/G$ is an isolated quotient singularity. Thus, the result follows from Thm.\ \ref{thm-main}.
\end{proof}

\begin{rem}\label{rem_klt isolated case}
    Observe that our proof applies more generally whenever $x\in X$ is an isolated threefold singularity which is a quotient of an cDV singularity. By similar arguments, to extend Thm.\ \ref{thm-open-LF-n=3} to the case of arbitrary isolated (log terminal) singularities, it remains to treat the case of isolated canonical Gorenstein singularities $x\in X$ such that a general hypersurface $x\in H \subseteq X$ has a minimal elliptic singularity at $x$, cf.\ \cite[Lem.\ 5.30]{KM98}. 
\end{rem}

\section{Examples}\label{section_sharpness}

\noindent
In this section, we provide some examples to show that Theorem \ref{thm-main} is sharp. Concretely, we demonstrate that the conclusion in Theorem \ref{thm-main} does not hold in either one of the following more general situations:
\begin{itemize}
    \item[(1)] $X$ has (not necessarily isolated) quotient singularities, or
    \item[(2)] $M$ is a singular symplectic variety.
\end{itemize}
\begin{eg}
    The following example shows that the conclusion in Theorem \ref{thm-main} is wrong if we only assume that $X$ has (not necessarily isolated) quotient singularities. The authors learnt of this example from an unpublished lecture note of Kurnosov \cite[p.\ 57]{Kur22}; they are grateful to the referee for pointing out that it goes back to a construction of Hwang and Oguiso \cite{HO11}.
    Indeed, let $E$ be an elliptic curve, denote $M':=E\times E\times E\times \C^3$, and let $f'\colon M'\to \C^3$ be the projection. Note that $M'$ admits an action by $G := \Z/2\Z$ as follows:
    $$[1]:(u,v,w,x,y,z)\mapsto (-u,-v,w+\tau,-x,-y,z),$$
    where $\tau$ is a $2$-torsion point of $E$. Let $M:=M'/G$ and $X:=\C^3/G$ be the respective quotients, and let $f\colon M\to X$ be the natural projection. Note that the action of $G$ on $M'$ has no fixed points, so that $M$ is smooth. Note, moreover, that the nowhere-vanishing $2$-form
    $$
    \sigma':=du\wedge dx+dv\wedge dy+dw\wedge dz\in H^2(M',\Omega^2_{M'}).
    $$ 
    is $G$-invariant, hence descends to a symplectic form $\sigma$ on $M$. Then $M$ is a smooth quasi-projective complex symplectic variety of dimension six, $f\colon M \rightarrow X$ is a Lagrangian fibration and $X\cong \C \times (\C^2/G)$ has non-isolated quotient singularities.
\end{eg}

\begin{eg} (Matsushita \cite[Thm.\ 1.9]{Mat15}, see also \cite[Thm.\ 11]{Sch22})

\noindent
    The following example shows that the conclusion in Theorem \ref{thm-main} is wrong without assuming that $M$ is smooth.
    Indeed, let $E_1, E_2$ be two elliptic curves, let $A:=E_1\times E_2$ and let $K_2A:=\mathrm{ker}(\mathrm{Alb}:S^{[3]}A\to A)$ be the generalized Kummer variety, see \cite[Part.\ 7]{Bea83}. Then it is well-known that $K_2A$ is a projective irreducible holomorphic symplectic manifold of dimension $4$, and that it admits a Lagrangian fibrations $f' \colon K_2A\to \mathbb P^2$ induced by the projection $A\to E_1$, see \cite[Sec.\ 2]{Mat15}. Now, Matsushima \cite[Sec.\ 2]{Mat15} describes an $f'$-equivariant action of $G:= \Z/3\Z$ on $K_2A$ by symplectic automorphisms and with isolated fixed points which is induced by the action of $G$ on $A$ determined by
    \[
    [1]:(x,y)\mapsto (x+\tau,y),
    \]
    where $\tau$ is a $3$-torsion point on $E_1$. Then the quotient  is an irreducible holomorphic symplectic variety in the sense of \cite{GKP16, Sch22} and the induced morphism
    \[
    f\colon M \rightarrow X:=\mathbb P^2/G
    \]
    is a Lagrangian fibration. Moreover, $X$ has isolated quotient singularities and is not smooth \cite[Sec.\ 2]{Mat15}.
\end{eg}

\bibliographystyle{alpha}
\bibliography{LF.bib}

@article{Bea83,
author = {Beauville, Arnaud},
 title = {Vari{\'e}t{\'e}s k{\"a}hleriennes dont la premi{\`e}re classe de {Chern} est nulle},
 fjournal = {Journal of Differential Geometry},
 journal = {J. Differ. Geom.},
 issn = {0022-040X},
 volume = {18},
 pages = {755--782},
 year = {1983},
}

@article{BFQ_SingularLefschetzRiemannRoch,
 author = {Baum, Paul and Fulton, William and Quart, George},
 title = {Lefschetz-{Riemann}--{Roch} for singular varieties},
 fjournal = {Acta Mathematica},
 journal = {Acta Math.},
 issn = {0001-5962},
 volume = {143},
 pages = {193--211},
 year = {1979},
}

@article{BM14,
 author = {Bayer, Arend and Macr{\`{\i}}, Emanuele},
 title = {{MMP} for moduli of sheaves on {{\(K3\)}}s via wall-crossing: nef and movable cones, {Lagrangian} fibrations},
 fjournal = {Inventiones Mathematicae},
 journal = {Invent. Math.},
 issn = {0020-9910},
 volume = {198},
 number = {3},
 pages = {505--590},
 year = {2014},
}

@article{Bog74,
 author = {Bogomolov, F.},
 title = {On the decomposition of {K{\"a}hler} manifolds with trivial canonical class},
 fjournal = {Mathematics of the USSR, Sbornik},
 journal = {Math. USSR, Sb.},
 issn = {0025-5734},
 volume = {22},
 pages = {580--583},
 year = {1975},
}

@misc{BK18,
 author = {Bogomolov, Fedor and Kurnosov, Nikon},
 title = {Lagrangian fibrations for {IHS} fourfolds},
 year = {2018},
 note={Preprint. \href{ 	https://doi.org/10.48550/arXiv.1810.11011}{arXiv:1810.11011}}
}

@article{Brie68,
 author = {Brieskorn, Egbert},
 title = {Rationale {Singularit{\"a}ten} komplexer {Fl{\"a}chen}},
 fjournal = {Inventiones Mathematicae},
 journal = {Invent. Math.},
 issn = {0020-9910},
 volume = {4},
 pages = {336--358},
 year = {1968},
}

@misc{BS23,
  title={A {H}odge-theoretic proof of {H}wang's theorem on base manifolds of {L}agrangian fibrations},
  author={Bakker, Benjamin and Schnell, Christian},
note={Preprint. \href{https://doi.org/10.48550/arXiv.2311.08977}{arXiv:2311.08977}},
  year={2023}
}

@book{BS76,
  AUTHOR = {B\u{a}nic\u{a}, Constantin and St\u{a}n\u{a}\c{s}il\u{a}, Octavian},
    TITLE = {Algebraic methods in the global theory of complex spaces},
    NOTE = {Translated from the Romanian},
    PUBLISHER = {Editura Academiei, Bucharest; John Wiley \& Sons, London-New York-Sydney},
    YEAR = {1976},
    PAGES = {296},
}

@article{DG67,
 author = {Dieudonn{\'e}, Jean and Grothendieck, A.},
 title = {Crit{\`e}res diff{\'e}rentiels de r{\'e}gularit{\'e} pour les localises des alg{\`e}bres analytiques},
 fjournal = {Journal of Algebra},
 journal = {J. Algebra},
 issn = {0021-8693},
 volume = {5},
 pages = {305--324},
 year = {1967},
}

@article{DHMV24,
 author = {Debarre, Olivier and Huybrechts, Daniel and Macr{\`{\i}}, Emanuele and Voisin, Claire},
 title = {Computing {Riemann}-{Roch} polynomials and classifying hyper-{K{\"a}hler} fourfolds},
 fjournal = {Journal of the American Mathematical Society},
 journal = {J. Am. Math. Soc.},
 issn = {0894-0347},
 volume = {37},
 number = {1},
 pages = {151--185},
 year = {2024},
}

@book{Fis76,
 author = {Fischer, Gerd},
 title = {Complex analytic geometry},
 fseries = {Lecture Notes in Mathematics},
 series = {Lect. Notes Math.},
 issn = {0075-8434},
 volume = {538},
 year = {1976},
 publisher = {Springer, Cham},
}

@incollection{GKP16,
 AUTHOR = {Greb, Daniel and Kebekus, Stefan and Peternell, Thomas},
     TITLE = {Singular spaces with trivial canonical class},
 BOOKTITLE = {Minimal models and extremal rays, {K}yoto, 2011, {M}ath. {S}oc. {J}apan, {T}okyo},
    SERIES = {Adv. Stud. Pure Math.},
    VOLUME = {70},
     PAGES = {67--113},
 PUBLISHER = {Math. Soc. Japan, Tokyo},
      YEAR = {2016},
}

@incollection{Gur03,
 author = {Gurjar, R.},
 title = {On a generalization of {Mumford}'s result and related question},
 booktitle = {Advances in algebra and geometry. {P}roceedings of the international conference on algebra and geometry, {H}yderabad, {I}ndia, 2001, {H}industan {B}ook {A}gency, {N}ew {D}elhi},
 pages = {171--178},
 year = {2003},
 publisher = {New Delhi: Hindustan Book Agency},
}

@article{GL14,
 author = {Greb, Daniel and Lehn, Christian},
 title = {Base manifolds for {Lagrangian} fibrations on hyperk{\"a}hler manifolds},
 fjournal = {IMRN. International Mathematics Research Notices},
 journal = {Int. Math. Res. Not.},
 issn = {1073-7928},
 volume = {2014},
 number = {19},
 pages = {5483--5487},
 year = {2014},
}

@book {Har77,
    AUTHOR = {Hartshorne, Robin},
     TITLE = {Algebraic geometry},
    SERIES = {Graduate Texts in Mathematics, No. 52},
 PUBLISHER = {Springer-Verlag, New York-Heidelberg},
      YEAR = {1977},
     PAGES = {xvi+496},
}

@article{HN18,
 author = {Halle, Lars and Nicaise, Johannes},
 title = {Motivic zeta functions of degenerating {Calabi}-{Yau} varieties},
 fjournal = {Mathematische Annalen},
 journal = {Math. Ann.},
 issn = {0025-5831},
 volume = {370},
 number = {3-4},
 pages = {1277--1320},
 year = {2018},
}

@article{HM22,
 author = {Huybrechts, D. and Mauri, M.},
 title = {Lagrangian fibrations},
 fjournal = {Milan Journal of Mathematics},
 journal = {Milan J. Math.},
 issn = {1424-9286},
 volume = {90},
 number = {2},
 pages = {459--483},
 year = {2022},
}

@article{HO11,
 author = {Hwang, Jun-Muk and Oguiso, Keiji},
 title = {Multiple fibers of holomorphic {Lagrangian} fibrations},
 fjournal = {Communications in Contemporary Mathematics},
 journal = {Commun. Contemp. Math.},
 issn = {0219-1997},
 volume = {13},
 number = {2},
 pages = {309--329},
 year = {2011},
}

@article{Hwa08,
  title={Base manifolds for fibrations of projective irreducible symplectic manifolds},
  author={Hwang, Jun-Muk},
  fjournal={Inventiones mathematicae},
journal = {Invent. Math.},
  volume={174},
  number={3},
  pages={625--644},
  year={2008},
  publisher={Springer}
}

@article{HX22,
 author = {Huybrechts, Daniel and Xu, Chenyang},
 title = {Lagrangian fibrations of hyperk{\"a}hler fourfolds},
 fjournal = {Journal of the Institute of Mathematics of Jussieu},
 journal = {J. Inst. Math. Jussieu},
 issn = {1474-7480},
 volume = {21},
 number = {3},
 pages = {921--932},
 year = {2022},
}

@book {KM98,
    AUTHOR = {Koll\'{a}r, J\'{a}nos and Mori, Shigefumi},
     TITLE = {Birational geometry of algebraic varieties},
    SERIES = {Cambridge Tracts in Mathematics},
    VOLUME = {134},
      NOTE = {With the collaboration of C.\ H.\ Clemens and A.\ Corti,
              Translated from the 1998 Japanese original},
 PUBLISHER = {Cambridge University Press, Cambridge},
      YEAR = {1998},
     PAGES = {viii+254},
}

@book {Kol07,
    AUTHOR = {Koll\'{a}r, J\'{a}nos},
     TITLE = {Lectures on resolution of singularities},
    SERIES = {Annals of Mathematics Studies},
    VOLUME = {166},
 PUBLISHER = {Princeton University Press, Princeton, NJ},
      YEAR = {2007},
     PAGES = {vi+208},
}

@book {Kol13,
    AUTHOR = {Koll\'{a}r, J\'{a}nos},
     TITLE = {Singularities of the minimal model program},
    SERIES = {Cambridge Tracts in Mathematics},
    VOLUME = {200},
      NOTE = {With a collaboration of S\'{a}ndor Kov\'{a}cs},
 PUBLISHER = {Cambridge University Press, Cambridge},
      YEAR = {2013},
     PAGES = {x+370},
}

@book{KMK99,
 author = {Keel, Se{\'a}n and McKernan, James},
 title = {Rational curves on quasi-projective surfaces},
 fseries = {Memoirs of the American Mathematical Society},
 series = {Mem. Am. Math. Soc.},
 issn = {0065-9266},
 volume = {669},
 year = {1999},
 publisher = {American Mathematical Society, Providence, RI},
}

@book{Kunz86,
 AUTHOR = {Kunz, Ernst},
     TITLE = {K\"ahler differentials},
    SERIES = {Advanced Lectures in Mathematics},
 PUBLISHER = {Friedr. Vieweg \& Sohn, Braunschweig},
      YEAR = {1986},
     PAGES = {viii+402},
}

@article{LT24,
  title={Special {K}{\"a}hler geometry and holomorphic {L}agrangian fibrations},
  author={Li, Yang and Tosatti, Valentino},
  fjournal = {Comptes Rendus. Math{\'e}matique. Acad{\'e}mie des Sciences, Paris},
 journal = {C. R., Math., Acad. Sci. Paris},
 issn = {1631-073X},
 volume = {362},
 number = {S1},
 pages = {171--196},
 year = {2024},
}

@misc{Kur22,
author={Kurnosov, Nikon},
    title={HYPERK{\"a}HLER GEOMETRY},
    year = {2022},
    note = {Notes for a lecture series delivered at the LTCC. Unpublished, available from the author's webpage \url{https://users.mccme.ru/nikon/hk/hk-ltcc.pdf}} 
}

@incollection{Mar14,
  title={Lagrangian fibrations of holomorphic-symplectic varieties of {$K3^{[n]}$}-type},
  author={Markman, Eyal},
  booktitle={Algebraic and {C}omplex {G}eometry: {I}n {H}onour of {K}laus {H}ulek's 60th {B}irthday, {S}pringer {P}roc. {M}ath. {S}tat., {S}pringer, {C}ham},
    SERIES = {Springer Proc. Math. Stat.},
    VOLUME = {71},
     PAGES = {241--283},
 PUBLISHER = {Springer, Cham},
      YEAR = {2014},
}

@article{Mat05,
 author = {Matsushita, Daisuke},
 title = {Higher direct images of dualizing sheaves of {Lagrangian} fibrations},
 fjournal = {American Journal of Mathematics},
 journal = {Am. J. Math.},
 volume = {127},
 number = {2},
 pages = {243--259},
 year = {2005},
}

@article{Mat15,
  title={On base manifolds of {L}agrangian fibrations},
  author={Matsushita, Daisuke},
  fjournal = {Science China. Mathematics},
 journal = {Sci. China, Math.},
 volume = {58},
 number = {3},
 pages = {531--542},
 year = {2015},
}

@misc{Mat99b,
 author = {Matsushita, Daisuke},
 title = {Equidimensionality of complex {L}agrangian fibrations},
 year = {1999},
 note = {Preprint. \href{https://doi.org/10.48550/arXiv.math/9911166}{arXiv:9911166}},
}

@article{Mat99,
 author = {Matsushita, Daisuke},
 title = {On fibre space structures of a projective irreducible symplectic manifold},
 fjournal = {Topology},
 journal = {Topology},
 issn = {0040-9383},
 volume = {38},
 number = {1},
 pages = {79--83},
 year = {1999},
}

@article{Mum61,
 author = {Mumford, D.},
 title = {The topology of normal singularities of an algebraic surface and a criterion for simplicity},
 fjournal = {Publications Math{\'e}matiques},
 journal = {Publ. Math., Inst. Hautes {\'E}tud. Sci.},
 issn = {0073-8301},
 volume = {9},
 pages = {5--22},
 year = {1961},
}

@article{MO22,
 author = {Mongardi, Giovanni and Onorati, Claudio},
 title = {Birational geometry of irreducible holomorphic symplectic tenfolds of {O}'{Grady} type},
 fjournal = {Mathematische Zeitschrift},
 journal = {Math. Z.},
 issn = {0025-5874},
 volume = {300},
 number = {4},
 pages = {3497--3526},
 year = {2022},
}

@article{MR21,
author = {Mongardi, Giovanni and Rapagnetta, Antonio},
 title = {Monodromy and birational geometry of {O}'{Grady}'s sixfolds},
 fjournal = {Journal de Math{\'e}matiques Pures et Appliqu{\'e}es. Neuvi{\`e}me S{\'e}rie},
 journal = {J. Math. Pures Appl. (9)},
 issn = {0021-7824},
 volume = {146},
 pages = {31--68},
 year = {2021},
}

@book{Nem22,
 author = {N{\'e}methi, Andr{\'a}s},
 title = {Normal surface singularities},
 fseries = {Ergebnisse der Mathematik und ihrer Grenzgebiete. 3. Folge},
 series = {Ergeb. Math. Grenzgeb., 3. Folge},
 issn = {0071-1136},
 volume = {74},
 isbn = {978-3-031-06752-5; 978-3-031-06753-2},
 year = {2022},
 publisher = {Springer, Cham},
}

@article{Ou19,
 author = {Ou, Wenhao},
 title = {Lagrangian fibrations on symplectic fourfolds},
 fjournal = {Journal f{\"u}r die Reine und Angewandte Mathematik},
 journal = {J. Reine Angew. Math.},
 issn = {0075-4102},
 volume = {746},
 pages = {117--147},
 year = {2019},
}

@article{Prill67,
 author = {Prill, D.},
 title = {Local classification of quotients of complex manifolds by discontinuous groups},
 fjournal = {Duke Mathematical Journal},
 journal = {Duke Math. J.},
 issn = {0012-7094},
 volume = {34},
 pages = {375--386},
 year = {1967},
}

@incollection{Reid80,
 AUTHOR = {Reid, Miles},
     TITLE = {Canonical {$3$}-folds},
 BOOKTITLE = {Journ\'ees de {G}\'eometrie {A}lg\'ebrique d'{A}ngers,
              {A}ngers, 1979, {S}ijthoff {\&} {N}oordhoff},
     PAGES = {273--310},
      YEAR = {1980},
}

@article{Sch22,
 author = {Schwald, Martin},
 title = {On the definition of irreducible holomorphic symplectic manifolds and their singular analogs},
 fjournal = {IMRN. International Mathematics Research Notices},
 journal = {Int. Math. Res. Not.},
 issn = {1073-7928},
 volume = {2022},
 number = {15},
 pages = {11864--11877},
 year = {2022},
}

@misc{SVV23,
 author = {Shen, Wanchun and Venkatesh, Sridhar and Vo, Anh Duc},
 title = {On $k$-{Du} {Bois} and $k$-rational singularities},
 year = {2023},
note={Preprint. \href{https://doi.org/10.48550/arXiv.2306.03977}{arXiv:2306.03977}},
}

@article{SY22,
  title={Topology of {L}agrangian fibrations and {H}odge theory of hyper-{K}{\"a}hler manifolds},
  author={Shen, Junliang and Yin, Qizheng},
   fjournal = {Duke Mathematical Journal},
 journal = {Duke Math. J.},
 issn = {0012-7094},
 volume = {171},
 number = {1},
 pages = {209--241},
 year = {2022},
}

@article{Tak95,
 author = {Takegoshi, Kensho},
 title = {Higher direct images of canonical sheaves tensorized with semi-positive vector bundles by proper {K{\"a}hler} morphisms},
 fjournal = {Mathematische Annalen},
 journal = {Math. Ann.},
 issn = {0025-5831},
 volume = {303},
 number = {3},
 pages = {389--416},
 year = {1995},
}

@article{Vet70,
 author = {Vetter, U.},
 title = {{\"A}u{{\ss}}ere {Potenzen} von {Differentialmoduln} reduzierter vollst{\"a}ndiger {Durchschnitte}},
 fjournal = {Manuscripta Mathematica},
 journal = {Manuscr. Math.},
 issn = {0025-2611},
 volume = {2},
 pages = {67--75},
 year = {1970},
}

@incollection{Yos16,
 author = {Yoshioka, K{\=o}ta},
 title = {Bridgeland's stability and the positive cone of the moduli spaces of stable objects on an abelian surface},
 booktitle = {Development of moduli theory, {P}roceedings of the 6th {M}athematical {S}ociety of {J}apan-{S}easonal {I}nstitute, {K}yoto, {J}apan, 2013, {M}athematical {S}ociety of {J}apan, {T}okyo},
 isbn = {978-4-86497-032-7},
 pages = {473--537},
 year = {2016},
 publisher = {Tokyo: Mathematical Society of Japan (MSJ)},
}
\end{document}